 \newtheorem{thm}{Theorem}[section]
 \newtheorem{lem}[thm]{Lemma}
 \theoremstyle{definition}
 \numberwithin{equation}{section}
\newtheorem{theorem}{Theorem}[section]
\theoremstyle{definition}
\theoremstyle{remark}
\newtheorem{remark}[theorem]{Remark}
\begin{document}
\title{Global Well-posedness of the Incompressible Magnetohydrodynamics}

\author{Yuan Cai\footnote{School of Mathematical Sciences, Fudan University, Shanghai 200433, P. R.China. Email: ycai14@fudan.edu.cn }
\and
Zhen Lei\footnote{School of Mathematical Sciences, Fudan University; Shanghai Center for Mathematical Sciences, Shanghai 200433, P. R. China. Email: zlei@fudan.edu.cn}
}
\date{}
\maketitle

\begin{abstract}
This paper studies the Cauchy problem of the incompressible
magnetohydrodynamic systems with or without  viscosity $\nu$.
Under the assumption that the initial velocity field
and the displacement of the initial magnetic field from a non-zero constant
are sufficiently small in certain weighted Sobolev spaces,
the Cauchy problem is shown to be globally well-posed for all $\nu \geq 0$ and all space dimension $n \geq 2$. Such a result holds true uniformly in nonnegative viscosity parameter.
The proof is based on the inherent strong null structure of the systems
which was first introduced for incompressible elastodynamics by the second author in \cite{Lei14}  and Alinhac's ghost weight technique.
\end{abstract}

\maketitle





\section{Introduction}
Magnetohydrodynamics (MHD) is one of most fundamental equations in magneto-fluid mechanics. It describes the dynamics of electrically conducting fluids
arising from plasmas or some other physical phenomenons (see, for instance \cite{LL}).
In this paper, we consider the Cauchy problem for the following incompressible MHD system in $\mathbb{R}^n$ for $n \geq 2$:
\begin{equation}\label{MHDOrginal}
\begin{cases}
\partial_tv + v\cdot\nabla v + \nabla \widetilde{p} = \nu\Delta v - H\times(\nabla\times H),\\[-4mm]\\
\partial_tH + v\cdot\nabla H = \nu\Delta H  + H\cdot\nabla v,\\[-4mm]\\
\nabla\cdot v = 0,\quad \nabla\cdot H=0.
\end{cases}
\end{equation}
Here $v$ represents the velocity, $H$ the magnetic field,
$\widetilde{p}$ the scalar pressure and $\nu \geq 0$ the nonnegative viscosity constant.
We will consider the problem in a strong magnetic background $e$, which is set to be $(1,0,...,0) \in \mathbb{R}^n$  without loss of generality. We show that the trivial steady solution $(u, H) = (0, e)$ is nonlinearly stable under small initial perturbations uniformly in $\nu \geq 0$. Such kind of solutions are referred to as Alfv${\rm \acute{e}}$n  waves in literatures (see, for instance, \cite{Alfven42}). The proof is based on the inherent strong null structure of the system
which was first introduced for incompressible elastodynamics by the second author in \cite{Lei14}  and Alinhac's ghost weight technique for scalar wave equations \cite{Alinhac00}.
We emphasize that in the presence of strong magnetic background and viscosity, system \eqref{MHDOrginal} is not scaling, rotation and Lorentz invariant. So Klainerman's vector field theory is not applicable in our situation.

To put our results in context, let us highlight some recent progress on this system. The local well-posedness of classical solutions for fully viscous MHD is established in \cite{SermangeTemam}, in which the global well-posendess is also proved in two dimensions. In \cite{BardosSulemSulem}, C. Bardos, C. Sulem, P.-L. Sulem
introduced the following good unknowns:
\begin{equation}
\Lambda^{\pm}=v\pm (H-e).\nonumber
\end{equation}
In terms of the above good knowns, the MHD system \eqref{MHDOrginal} can be rewritten  as
\begin{equation} \label{MHD}
\begin{cases}
\partial_t \Lambda^{+}- e\cdot \nabla \Lambda^{+}
+\Lambda^{-}\cdot\nabla\Lambda^{+} + \nabla p = \nu\Delta\Lambda^+,\\[-4mm]\\
\partial_t \Lambda^{-} + e\cdot \nabla \Lambda^{-}
+\Lambda^{+}\cdot\nabla\Lambda^{-} + \nabla p = \nu\Delta\Lambda^-,\\[-4mm]\\
\nabla\cdot \Lambda^{+} = 0,\quad \nabla\cdot \Lambda^{-}=0.
\end{cases}
\end{equation}
Here $p = \widetilde{p} + \frac{|H|^2}{2}$. In the inviscid case $\nu = 0$, Bardos, Sulem and Sulem proved that system \eqref{MHD} is
globally well-posed for small initial $\Lambda^\pm$ in certain weighted H${\rm \ddot{o}}$lder space. Very recently, for the ideal MHD system (where there is viscosity in the momentum equation but there is no resistivity in the magnetic equation),
Lin, Xu and Zhang \cite{LXZ} obtained the global well-posedness in the two-dimensional case for small initial $u$ and $H - e$ in appropriate Sobolev spaces (see also some further results in \cite{HuLin, RWXZ2014, ZhangT, CW}). The three-dimensional case was then solved by Xu and Zhang in \cite{XZ}, see also \cite{LinZhang, AbidiZhang}. Lei was the first to construct a family of solutions without any smallness constraints in the presence of axis symmetry \cite{Lei15} in three dimensions.

In \cite{Lei14}, the second author introduced the concept of \textit{strong null condition} and explored that incompressible elastodynamics automatically satisfies such a condition. Based on the inherent strong degenerate structure and Alinhac's ghost technique which was originally introduced in \cite{Alinhac00}, the second author proved the global well-posedness of small solutions to the two-dimensional incompressible elastodynamics. Roughly speaking, we call a system to satisfy a strong null condition if the good unknowns in nonlinearities are always applied by a space or time derivative. For nonlinear wave equations or elastodynamics, good unknowns are the tangential derivative of unknowns along light cones.

For the MHD system \eqref{MHD}, we observe that the unknown $\Lambda^+$ ($\Lambda^-$, respectively) can be viewed as a good unknown in $\Lambda^+$-equation ($\Lambda^-$-equation, respectively). On the other hand, in the nonlinearity of $\Lambda^{-}\cdot\nabla\Lambda^{+}$ in $\Lambda^+$-equation (at a heuristical, let us first forget about the pressure term), the good unknown $\Lambda^+$ is applied by a space derivative $\nabla$ and $\Lambda^\pm$ are transported along different characteristics. Similar phenomenon is also true for $\Lambda^-$-equation. So the strong null structure is present in the incompressible MHD equations \eqref{MHD} and global well-posedness are expected for all $n \geq 2$. This nature is philosophically similar to the space-time resonance introduced by Germain, Masmoudi and Shatah \cite{GMS}. Note that in the presence of strong magnetic background and viscosity, Klainerman's vector field theory is not applicable for system \eqref{MHD}. We will still confirm the intuitive expectation in this article. Our result holds true for all $\nu \geq 0$.

The weighted energy $E_k$, the dissipative ghost weight energy $W_k$, the modified weighted energy $\mathcal{E}_k$, the dissipative energy $V_k$ and other notations appeared in the following theorems will be explained in section 2.
The first main result of this paper is concerning the MHD system without viscosity,
which can be stated as follows.
\begin{thm}\label{GlobalInvMHD}
Let $\nu = 0$, $1/2<\mu<2/3$, $(\Lambda^+_0, \Lambda^-_0) \in H^k(\mathbb{R}^n)$ with $k \ge
n+3$, $n \geq 2$. Suppose that $(\Lambda^+_0, \Lambda^-_0)$ satisfies
\begin{eqnarray*}
&&\sum_{1\leq|a|\leq k}
\int_{\mathbb{R}^n} |\langle x\rangle^{2\mu} \nabla^a\Lambda^+_0(x)|^2
+|\langle x\rangle^{2\mu} \nabla^a\Lambda^-_0(x)|^2dx\\
&&+\int_{\mathbb{R}^n} |\langle x\rangle^{\mu} \Lambda^+_0(x)|^2
+|\langle x\rangle^{\mu} \Lambda^-_0(x)|^2dx\leq \epsilon.
\end{eqnarray*}
There exists a positive constant $\epsilon_0$ which
depends only on $k,\ \mu$ and $n$ such that, if $\epsilon
\leq \epsilon_0$, then the MHD system \eqref{MHD}
  with  the following initial data
$$\Lambda^+(0, x)= \Lambda^+_0(x)\quad \Lambda^-(0, x)= \Lambda^-_0(x)$$ has a unique
global classical solution which satisfies
\begin{eqnarray}\nonumber
E_k(t) +W_k(t)\leq C_0\epsilon
\end{eqnarray}
for some $C_0 > 1$ uniformly in $t$.
\end{thm}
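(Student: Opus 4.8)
My plan is a continuity/bootstrap argument. Granting the classical local well-posedness of \eqref{MHD} in $H^k$ (with $k\ge n+3$) and the associated continuation criterion, it is enough to prove the a priori bound: there are constants $C_0>1$ and $\epsilon_0>0$, depending only on $k,\mu,n$, such that on any interval $[0,T]$ on which a smooth solution exists and satisfies $E_k(t)+W_k(t)\le 2C_0\epsilon$, one actually has $E_k(t)+W_k(t)\le C_0\epsilon$ there, provided $\epsilon\le\epsilon_0$. I would first fix Alinhac-type ghost weights $e^{-q_\pm}$, where $q_\pm$ are bounded, monotone functions of the characteristic variables $x_1\mp t$ with $|q_\pm'(\sigma)|\simeq\langle\sigma\rangle^{-1-\delta}$ for a small $\delta>0$, chosen so that $(\partial_t-e\cdot\nabla)e^{-q_+}$ and $(\partial_t+e\cdot\nabla)e^{-q_-}$ have a favourable sign; and I would use, for the $\Lambda^\pm$-equation, the polynomial weight $\langle x\rangle^{2\mu}$ in the form that is (almost) invariant along the corresponding characteristic flow, so that it coincides with $\langle x\rangle^{2\mu}$ at $t=0$ and stays controlled against the solution thereafter.

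Next, apply $\nabla^a$ for $1\le|a|\le k$ to each equation of \eqref{MHD}, multiply the $\Lambda^+$-equation by the ghost- and polynomially-weighted $\nabla^a\Lambda^+$ and the $\Lambda^-$-equation by the corresponding weighted $\nabla^a\Lambda^-$, integrate over $\mathbb{R}^n\times[0,t]$, and sum over $a$; the zeroth-order term, with the lighter weight $\langle x\rangle^{\mu}$, is treated in the same way. Since $\nabla^a$ commutes with $e\cdot\nabla$ and the polynomial weights are transport-adapted, there is no derivative loss at top order from the linear part; instead, differentiating the weights along the transport operators combines with the ghost weights to produce exactly the dissipative ghost energy $W_k(t)$, schematically $\sum_{1\le|a|\le k}\int_0^t\!\!\int_{\mathbb{R}^n}\big(|q_+'(x_1-s)|\,e^{-2q_+}\langle\cdot\rangle^{4\mu}|\nabla^a\Lambda^+|^2+|q_-'(x_1+s)|\,e^{-2q_-}\langle\cdot\rangle^{4\mu}|\nabla^a\Lambda^-|^2\big)\,dx\,ds$. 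What remains on the right are the transport nonlinearities $\nabla^a(\Lambda^\mp\cdot\nabla\Lambda^\pm)$ and the pressure $\nabla^a\nabla p$.

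All remaining terms must be controlled by $\epsilon\,(E_k+W_k)$ plus time-integrable multiples of $E_k$, and this is where the strong null structure is decisive: each transport nonlinearity is (a field)$\,\cdot\nabla$(the good unknown of that equation), and the two fields $\Lambda^\pm$ ride \emph{transversal} characteristic families whose effective supports separate linearly in $t$. When $\nabla^a$ distributes so that the low-order factor is $\Lambda^\mp$, put it in $L^\infty$ by the weighted Sobolev inequality — $k\ge n+3$ gives $\|\langle\cdot\rangle^{2\mu}\nabla^b\Lambda^\mp\|_{L^\infty}\lesssim E_k^{1/2}$, hence the pointwise decay $|\nabla^b\Lambda^\mp(s,x)|\lesssim E_k^{1/2}\langle x_1\mp s\rangle^{-2\mu}$ for $|b|$ up to about $k-n/2$ — and use that on the region where the conjugate weighted quantity lives this factor carries a decay $\langle s\rangle^{-2\mu}$, time-integrable because $\mu>1/2$; on the complementary region one instead exploits the spatial localization forced by the transport-adapted weight together with the $\langle\cdot\rangle^{-1-\delta}$ gain built into $W_k$. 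When $\nabla^a$ lands entirely on $\Lambda^\pm$ ($|a|=k$, one extra derivative), integrate that derivative by parts and use $\nabla\cdot\Lambda^\mp=0$ to annihilate the worst term, leaving a contribution bounded by $\|\Lambda^\mp\|_{L^\infty}\,W_k\lesssim E_k^{1/2}W_k$. For the pressure, taking the divergence of each equation and using both incompressibility conditions yields $-\Delta p=\partial_i\Lambda^\pm_j\,\partial_j\Lambda^\mp_i$, so that $\nabla^a\nabla p=-\nabla(-\Delta)^{-1}\nabla^a(\partial_i\Lambda^\pm_j\partial_j\Lambda^\mp_i)$ is a Riesz operator applied to a bilinear form that again mixes the two characteristics and carries a derivative on each factor; its contribution is estimated by weighted Calder\'on--Zygmund bounds and the same null-structure/Sobolev decay, after commuting the polynomial weight through the singular integral at the price of lower-order, absorbable errors. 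The upper bound $\mu<2/3$ enters here, keeping the weight subcritical relative to the available regularity and making the weighted elliptic estimates close.

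Collecting the estimates gives $E_k(t)+W_k(t)\le C\epsilon+C\,(E_k(t)+W_k(t))^{3/2}$ — the cubic power coming from one Sobolev factor $\lesssim E_k^{1/2}$ multiplying a quadratic energy/ghost-energy quantity — and taking $C_0=2C$ and $\epsilon_0$ small closes the bootstrap and yields the stated bound uniformly in $t$. I expect the main obstacle to be precisely the uniform-in-time closure: with no scaling, rotation or Lorentz vector fields available (the constant magnetic background breaks these symmetries), all decay must be squeezed out of the ghost weights and of the transversality of the two Els\"asser characteristic families, and every quadratic spacetime integral on the right must be shown to produce a genuinely time-integrable factor — merely bounded would force a Gronwall argument and hence exponential, not uniform, growth — with $1/2<\mu<2/3$ being exactly the window in which this balance works. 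The nonlocal pressure term is the other, technically heaviest point, since neither the polynomial weight nor the ghost weights commute with $(-\Delta)^{-1}\nabla^2$; its treatment rests on recovering the null structure inside the elliptic problem and carefully tracking the weighted commutators.
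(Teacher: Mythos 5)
Your overall architecture --- Els\"asser variables, transport-adapted polynomial weights $\langle x\pm et\rangle^{2\mu}$ commuting with $\partial_t\mp e\cdot\nabla$, an Alinhac ghost weight generating the damping $W_k$, a lighter weight at zero order, integration by parts at top order via $\nabla\cdot\Lambda^{\mp}=0$, and closure through $E_k+W_k\lesssim \epsilon+(E_k+W_k)^{3/2}$ --- is the paper's. The genuine gap is your treatment of the pressure. You propose to bound $\nabla\nabla^a p$ by weighted Calder\'on--Zygmund estimates, ``commuting the polynomial weight through the singular integral at the price of lower-order, absorbable errors.'' This is precisely the step the paper shows cannot be run directly in low dimensions (Remark \ref{RemPreInte}), and the theorem is claimed for all $n\geq 2$. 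Pairing $\int \nabla\nabla^a p\cdot\langle x+e\tau\rangle^{4\mu}\nabla^a\Lambda^{+}e^{q(\sigma^+)}$ against the ghost energy forces a bound on $\|\langle x+et\rangle^{2\mu}\langle x-et\rangle^{\mu}\nabla\nabla^a p\|_{L^2_{t,x}}$, i.e.\ boundedness of the Riesz operator in $L^2$ with weight $\langle x+et\rangle^{4\mu}\langle x-et\rangle^{2\mu}$ of total exponent $6\mu>3$; since $\langle x\rangle^{\alpha}\in\mathcal{A}_2(\mathbb{R}^n)$ requires $|\alpha|<n$, this weight is not $\mathcal{A}_2$ in $\mathbb{R}^2$ or $\mathbb{R}^3$, and even the single weight $\langle x\rangle^{4\mu}$ needed for $\|\langle x+et\rangle^{2\mu}\nabla\nabla^a p\|_{L^2}$ fails to be $\mathcal{A}_2$ in $\mathbb{R}^2$ because $4\mu>2>1/2\cdot 4$. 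The obstruction is the unboundedness of the conjugated operator itself, not a lower-order commutator that can be absorbed.

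What the paper does instead, and what is absent from your plan, is threefold. First, in the energy identity the pressure term is integrated by parts using $\nabla\cdot\nabla^a\Lambda^{+}=0$, so the gradient falls either on $\langle x+e\tau\rangle^{4\mu}$ (losing one power of the weight) or on $e^{q(\sigma^+)}$ (gaining the ghost factor $\langle e\cdot x-\tau\rangle^{-2\mu}$); this reduces everything to $\|\langle x+et\rangle^{2\mu-1}\langle x-et\rangle^{\mu}\nabla\nabla^{a'}p\|$ and $\|\langle x+et\rangle^{2\mu}\nabla\nabla^{a'}p\|$ with $|a'|=|a|-1$. Second, the former is controlled by genuinely $\mathcal{A}_2$ weights ($\langle x\rangle^{6\mu-2}$ and the interpolated $\langle x\pm et\rangle^{4\mu-2}\langle x\mp et\rangle^{2\mu}$, admissible since $0<6\mu-2<2$), and this is where the choice of the \emph{lighter} exponent $\mu$ at zero order is essential --- you adopt the two-tier weights but never use them for the pressure, which is their actual purpose. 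Third, the remaining estimate (Lemma \ref{LemPreWei3}) is proved not by Calder\'on--Zygmund theory but by squaring $\|\langle x+et\rangle^{2\mu}\nabla\nabla^a p\|_{L^2}$, integrating by parts against the elliptic equation \eqref{Pressure1} written in divergence form, and absorbing the top term, so that only the admissible exponents $3\mu-1\le 2\mu$ and $2\mu-1$ together with the null structure are ever invoked. Without these reductions your argument could at best close for $n\geq 3$ with a restricted range of $\mu$, and not at all for $n=2$.
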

The second main result of this paper is concerned with the MHD system with viscosity, which is  stated as follows:
\begin{thm}\label{GlobalvisMHD}
Let  $\nu\geq 0$, $1/2<\mu<2/3$. Let $(\Lambda^+_0(x), \Lambda^-_0(x)) \in H^k(\mathbb{R}^n)$ with $k \ge n+3$, $n \geq 2$.
Suppose that $(\Lambda^+_0(x), \Lambda^-_0(x))$ satisfies
\begin{eqnarray*}
&&\sum_{1\leq|a|\leq k}
\int_{\mathbb{R}^n} |\langle x\rangle^{2\mu} \nabla^a\Lambda^+_0(x)|^2
+|\langle x\rangle^{2\mu} \nabla^a\Lambda^-_0(x)|^2dx\\
&&+\int_{\mathbb{R}^n} |\langle x\rangle^{\mu} \Lambda^+_0(x)|^2
+|\langle x\rangle^{\mu} \Lambda^-_0(x)|^2dx\\
&&+\int_{\mathbb{R}^n} \big| |\nabla|^{-1} \Lambda^+_0(x)\big|^2
+\big| |\nabla|^{-1} \Lambda^-_0(x)\big|^2dx\leq \epsilon.
\end{eqnarray*}
There exists a positive constant $\epsilon_0$ which
depends only on $k,\ \mu$ and $n$, but is independent of the viscosity $\nu \geq 0$, such that, if $\epsilon
\leq \epsilon_0$, then the MHD system \eqref{MHD}
and the following initial data
$$\Lambda^+(0, x)= \Lambda^+_0(x)\quad \Lambda^-(0, x)= \Lambda^-_0(x)$$
has a unique global solution which satisfies
\begin{eqnarray}\nonumber
\mathcal{E}_k(t)+V_k(t)+W_k(t)\leq C_0\epsilon
\end{eqnarray}
for some $C_0 > 1$ uniformly in $t$.
\end{thm}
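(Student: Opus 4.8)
The plan is to run a continuity (bootstrap) argument on the combined norm $\mathcal{E}_k(t)+V_k(t)+W_k(t)$, choosing the constants so that the bound survives passage to the limit $\nu\to 0^+$. First I would set up the a priori assumption $\mathcal{E}_k(t)+V_k(t)+W_k(t)\le 2C_0\epsilon$ on a maximal time interval $[0,T^*)$ and aim to improve it to $\le C_0\epsilon$ provided $\epsilon\le\epsilon_0$. The key structural input is that $\Lambda^+$ is a good unknown for the $\Lambda^+$-equation (transported along $\partial_t-e\cdot\nabla$) and $\Lambda^-$ for the $\Lambda^-$-equation (transported along $\partial_t+e\cdot\nabla$), and that in every semilinear term $\Lambda^\mp\cdot\nabla\Lambda^\pm$ the good unknown carries a genuine spatial derivative while the two factors ride opposite characteristics. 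This is the strong null structure; combined with Alinhac's ghost weight $e^{q(\sigma)}$ with $\sigma=r-t$ (respectively $r+t$) it produces, upon applying $\nabla^a$ for $|a|\le k$ and testing against the weighted solution, a spacetime ghost-weight term $W_k$ controlling $\langle\sigma\rangle^{-1/2}$-weighted derivatives of $\Lambda^\pm$, and the viscous term contributes the dissipation $V_k=\nu\int_0^t\|\nabla\Lambda^\pm\|_{H^k_{\text{weighted}}}^2$ with a constant independent of $\nu$. The $|\nabla|^{-1}$ piece of the data is needed precisely to close the low-frequency part of the energy uniformly in $\nu$, since without viscosity-driven decay one must propagate an $\dot H^{-1}$-type quantity; I would include $\||\nabla|^{-1}\Lambda^\pm\|_{L^2}$ in $\mathcal{E}_k$ and check that the nonlinear and pressure contributions to its evolution are controlled by the higher-order norms via the divergence-free condition and a Riesz-transform bound on the Leray projection.

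The main steps, in order, are: (1) derive the differential energy identity for $\nabla^a\Lambda^\pm$ with the ghost weight, isolating the good spacetime term $W_k$ and the dissipation $V_k$; (2) estimate the semilinear nonlinearities $\Lambda^\mp\cdot\nabla\nabla^a\Lambda^\pm$ and the commutator terms $[\nabla^a,\Lambda^\mp\cdot\nabla]\Lambda^\pm$ by splitting into the case where $\Lambda^\mp$ is differentiated few times (so $\Lambda^\mp$ is in $L^\infty$, estimated by weighted Sobolev embedding and the decay encoded in $E_k$/$\mathcal{E}_k$) versus the case where $\Lambda^\pm$ is differentiated few times (so the good-unknown derivative $\nabla\Lambda^\pm$ is absorbed by $W_k^{1/2}$ and the opposite-characteristic factor is estimated, again using that $\langle\sigma_+\rangle^{-1}+\langle\sigma_-\rangle^{-1}\gtrsim\langle t\rangle^{-1}$ off a thin region, which is the geometric heart of the null mechanism); (3) handle the pressure $\nabla p$ by taking the divergence of the momentum equations to solve $-\Delta p=\sum\partial_i\partial_j(\Lambda^{\mp}_j\Lambda^{\pm}_i)$, so that $\nabla p$ is a Calderón–Zygmund image of a product of two good unknowns and falls under the same bilinear estimates, with the ghost weight commuting with the singular integral up to acceptable errors; (4) close the low-frequency $|\nabla|^{-1}$ estimate separately; (5) combine everything into an inequality of the schematic form $\frac{d}{dt}(\mathcal E_k+W_k)+ \text{(dissipation)}\lesssim (\mathcal E_k+W_k+V_k)^{1/2}\,(W_k+\text{dissipation})+\ldots$, integrate in time, and use $\int_0^\infty\langle t\rangle^{-1-\delta}\,dt<\infty$-type gains (here provided by the ghost weight rather than by Klainerman-type decay) to absorb the right side; (6) conclude the bootstrap and upgrade the local solution from \cite{SermangeTemam}-type theory to a global one, noting all constants are $\nu$-independent so Theorem \ref{GlobalInvMHD} is recovered as the $\nu=0$ endpoint.

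The hard part will be step (2)–(3) carried out \emph{uniformly in $\nu$}: when $\nu>0$ the natural thing is to exploit parabolic smoothing, but that gives $\nu$-dependent constants and degenerates as $\nu\to 0$, so one must instead always close with the ghost-weight/null-structure mechanism and treat $\nu V_k$ as a bonus nonnegative term, never as a tool. In particular the commutator and pressure estimates must be done so that every ``dangerous'' factor is either a differentiated good unknown (paired with $W_k^{1/2}$) or decays like $\langle t\rangle^{-n/2+}$ from weighted Sobolev embedding, with no borrowing from $\nu\Delta$. A secondary technical nuisance is that the ghost weights for $\Lambda^+$ and $\Lambda^-$ are built from \emph{different} functions $\sigma=r\mp t$, so the cross terms in $\Lambda^\mp\cdot\nabla\Lambda^\pm$ mix the two weights; one resolves this by the pointwise inequality relating $\langle r-t\rangle$ and $\langle r+t\rangle$ noted above, which converts the mismatch into an integrable-in-time factor. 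Once these are in place the rest is the now-standard bookkeeping of weighted energy estimates, and the inclusion of the extra $|\nabla|^{-1}$-datum makes the low-frequency closure routine.
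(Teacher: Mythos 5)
Your outline has the right skeleton (ghost weight plus the strong null structure of $\Lambda^{\mp}\cdot\nabla\Lambda^{\pm}$, several tiers of weighted energy, a continuity argument uniform in $\nu$), but it misses or misstates the three points on which the paper's proof actually turns. First, the geometry and the decay mechanism: the ghost weight here is built on the planar phases $\sigma^{\pm}=\pm e\cdot x-t$ (Alfv\'en waves travel along the background field $e$), not on $r\mp t$, and the damping it produces is $\int\frac{|\langle x\pm e\tau\rangle^{2\mu}\nabla^a\Lambda^{\pm}|^2}{\langle e\cdot x\mp\tau\rangle^{2\mu}}$. There is no pointwise decay $\langle t\rangle^{-n/2+}$ to be had from weighted Sobolev embedding --- the unknowns are transported, and the paper says so explicitly --- and the inequality $\langle\sigma_+\rangle^{-1}+\langle\sigma_-\rangle^{-1}\gtrsim\langle t\rangle^{-1}$ only yields $\langle t\rangle^{-\mu}$ with $\mu<1$, which is not integrable. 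The actual closure is that \emph{both} factors of every bilinear term are placed in $L^2_t$-type ghost norms: $\frac{\langle x+e\tau\rangle^{2\mu}\nabla^c\Lambda^{+}}{\langle e\cdot x-\tau\rangle^{\mu}}\in L^2_{t,x}$ and $\frac{\langle x-e\tau\rangle^{\mu}\Lambda^{-}}{\langle e\cdot x+\tau\rangle^{\mu}}\in L^2_tL^{\infty}_x$ (both controlled by $W_k$), their product being $L^1_t$; for the top-order term one must first integrate by parts against $e^{q(\sigma^{+})}$ using $\nabla\cdot\Lambda^{-}=0$ so that only the derivative hitting the ghost weight survives. Second, the pressure: you assert the ghost weight "commutes with the singular integral up to acceptable errors," but $\langle x\rangle^{4\mu}$ is \emph{not} an $\mathcal{A}_2$ weight in $\mathbb{R}^2$ for $\mu>\tfrac12$, so the natural weighted Calder\'on--Zygmund bound for $\|\langle x+et\rangle^{2\mu}\nabla\nabla^ap\|_{L^2}$ fails (the paper's Remark 4.2). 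This is precisely why the energy uses weight $\langle x\pm et\rangle^{\mu}$ at order zero and $\langle x\pm et\rangle^{2\mu}$ at higher orders, and why the paper proves Lemma 3.2 by an integration-by-parts identity for $\|\langle x+et\rangle^{2\mu}\nabla\nabla^ap\|_{L^2}^2$ reducing to the $\mathcal{A}_2$-admissible powers $\mu$ and $3\mu-1$. Your proposal has no substitute for this step, and in $n=2$ it would not close.

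Third, and specific to Theorem 1.2: the difficulty with $\nu>0$ is not that parabolic smoothing tempts one into $\nu$-dependent constants, but that the moving weights do not commute with $\nu\Delta$. Testing the weighted equation produces the indefinite cross term $2\nu\int\nabla\langle x+e\tau\rangle^{2\mu}\cdot\nabla\nabla^a\Lambda^{+}\,\langle x+e\tau\rangle^{2\mu}\nabla^a\Lambda^{+}e^{q(\sigma^+)}$, which Cauchy--Schwarz converts into a harmless fraction of the top dissipation plus $16\nu\int|\langle x+e\tau\rangle^{\mu}\nabla^a\Lambda^{+}|^2$ --- a term with the \emph{wrong sign} and a \emph{weaker weight}. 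The paper absorbs it by a telescoping combination $\sum_j(\tfrac1{64})^jI_j$ of the order-$j$ estimates, each level's leftover being eaten by the dissipation of the level below, terminating at the unweighted $(-1)$-order estimate whose dissipation is exactly $\nu\int_0^t\int|\Lambda^{\pm}|^2$. That is the real reason the $|\nabla|^{-1}$ datum appears; it is not a generic low-frequency propagation issue. Treating "$\nu V_k$ as a bonus nonnegative term, never as a tool" is therefore exactly backwards for this theorem: without using the dissipation hierarchy to cancel the weight--Laplacian commutators, the viscous estimate does not close uniformly in $\nu$.
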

\begin{remark}
Theorem \ref{GlobalInvMHD} can be viewed as a generalization of Bardos-Sulem-Sulem in \cite{BardosSulemSulem} in the framework of Sobolev spaces.
\end{remark}
\begin{remark}
Recently, among other things, He, Xu and Yu \cite{HeXuYu} proved the three-dimensional case. Their proof is inspired by the nonlinear stability of Minkowski space-time and based on a new observation on conformal symmetry structure of the system. After the completion of the paper, we were informed by professor Yu that their paper was submitted on Octo. 26, 2015, which is around half a year earlier than ours. On the other hand,, our proof works for both two and higher dimensions, and looks shorter.
\end{remark}

We emphasize that the viscosity wouldn't help us during the proof since our result is independent of $\nu \geq 0$. At the first glance there is no hope to obtain the time decay of the $L^\infty$ norm of $\Lambda^\pm$ since they are transported when looking at the linearized equations. Then the $L^1$-time integrability of the $L^\infty$ norm of unknowns seems to be a tough task for the global existence of solutions.  Part of our ideas are
inspired by the recent work of Lei \cite{Lei14} on global solutions for 2D incompressible Elastodynamics and the work of Bardos, Sulem and Sulem \cite{BardosSulemSulem}.
As has been observed in \cite{BardosSulemSulem}, when linearizing the system \eqref{MHD} with $\nu=0$, one sees that $\Lambda^{+}$ and $\Lambda^{-}$  propagate
along the background magnetic field $e$ in
opposite directions. In fact, this phenomenon has been found by Alfv\'{e}n in \cite{Alfven42}.
Our first observation is that the nonlinearities in $\Lambda^+$-equation always contain
the good unknown $\Lambda^{+}$  which is applied by a spatial derivative (the same in $\Lambda^-$-equation).
This gives us the so-called \textit{strong null condition}. To capture this strong null structure,
we introduce an interactive ghost weight energy (which is a modification
of the original  one of Alinhac \cite{Alinhac00}) and perform two different orders of energy estimates with different weights (in the viscous case, we need three different orders of energy estimates with different weights) to yield the fact that energies of good unknowns with appropriate weights are always integrable in time.

Let us explain our strategy in a little bit more details. In the inviscid case of $\nu = 0$, as has been indicated above, it is impossible to obtain the
$L^1$ time integrability of the $L^\infty$ norm of all unknowns even in the linear case. Take the $\Lambda^+$-equation as example in which we
view $\Lambda^+$ as the good unknown. We first apply the
technique of Alinhac's ghost weight energy estimate to produce a damping term
$\int\frac{|\nabla^k\Lambda^+|^2}{\langle e\cdot x-t\rangle^{2\mu}}dx$, which requires that $\mu > \frac{1}{2}$. To use this damping term to
kill the nonlinearity in $\Lambda^+$-equation when doing the energy estimate, motivated by \cite{BardosSulemSulem}, we perform the weighted energy estimate for $\langle x+et\rangle^{2\mu}\nabla^k\Lambda^+$ instead which produces a damping term
$\int\frac{|\langle x+et\rangle^{2\mu}\nabla^k\Lambda^+|^2}{\langle e\cdot x -t\rangle^{2\mu}}dx$. Clearly, this damping term is sufficient to kill the nonlinearity $\nabla^k(\Lambda^-\cdot\nabla\Lambda^+)$ if we also add the weight $\langle x-et\rangle^{2\mu}$ to $\Lambda^-$ and do ghost weight energy estimate for $\langle x\pm et\rangle^{2\mu}\nabla^k\Lambda^\pm $ with two different weights. But
then we run into a technically difficult situation when estimating the pressure term due to the presence of the non-local Riesz operator. The reason is that one needs to commute certain weights with the Riesz operator in $L^2$, but  in two dimensions the weights are not of $\mathcal{A}_2$ class if $\mu > \frac{1}{2}$. Our strategy is to do energy estimates for $\langle x+et\rangle^{\mu}\Lambda^+$ and $\langle x+et\rangle^{2\mu}\nabla^k\Lambda^+$ ($k \geq 1$), respectively, with different weights. More details are presented in section 4. We emphasize that in three dimensions, the argument is much easier since the weights are of $\mathcal{A}_2$ class for $\frac{1}{2} < \mu < \frac{3}{4}$ and the pressure term can be estimated by using weighted Calderon-Zygmund theory.

When the viscosity is present, the problem is more involved.
The main difficulty is that the weighta $\langle x\pm et \rangle^{2\mu}$
are not compatible with the dissipative system.
To see this, let us take $L^2$ inner product of the $\nabla^k\Lambda^+$ equation of  \eqref{MHD}
with $\langle x+e t \rangle^{4\mu}  \nabla^k\Lambda^{+}$to derive that (below we have ignored the ghost weight for the simplicity of presentation of ideas)
\begin{align}\label{a1}
&\frac 12 \partial_t\int_{\mathbb{R}^n}
|\langle x+e t \rangle^{2\mu}  \nabla^k\Lambda^{+}|^2 dx + \nu\int|\langle x +e t \rangle^{2\mu} \nabla\nabla^k\Lambda^+|^2dx\nonumber\\
&= - 2\nu\int(\nabla\langle x+e t \rangle^{2\mu} \cdot\nabla)\nabla^k\Lambda^+\cdot
  \langle x+e t \rangle^{2\mu}\nabla^k\Lambda^+ dx + \cdots,\quad k \geq 1,
\end{align}
and take $L^2$ inner product of the $\Lambda_+$ equation of \eqref{MHD}
with $\langle x+e t \rangle^{2\mu} \Lambda^{+}$to get
\begin{align}\label{a2}
&\frac 12 \partial_t\int_{\mathbb{R}^n}
|\langle x+e t \rangle^{\mu}  \Lambda^{+}|^2 dx + \nu\int|\langle x +e t \rangle^{\mu} \nabla\Lambda^+|^2dx \nonumber\\
&= - 2\nu\int(\nabla\langle x+e t \rangle^{\mu} \cdot\nabla)\Lambda^+\cdot
  \langle x+e t \rangle^{\mu}\Lambda^+ dx + \cdots
\end{align}
The right hand side of \eqref{a1} will be killed by certain combination of dissipative terms on the left hand sides of \eqref{a1}-\eqref{a2} for $k \geq 0$. To control the remaining term on the right hand side of \eqref{a2}, our strategy is to do an extra $-1$ order energy estimate without weights. The strong null structure of nonlinearities is the key to close the above arguments.

The remaining part of this paper is organized as follows: In
section 2 we will introduce some notations and some preliminary estimates.
The pressure gradient will be estimated in section 3.
Section 4 is devoted to energy estimate of MHD without viscosity.
In section 5, we will prove the energy estimate in the presence of viscosity.

\section{Preliminaries}

In this section we will first introduce the notations for the weighted energy $E_k$, the dissipative ghost weight energy $W_k$, the modified weighted energy $\mathcal{E}_k$, the dissipative energy $V_k$, etc. Then we study the commutation property of weights with the equations. We will also give an elementary imbedding inequality.

\subsection{Notations}
We first introduce some notations.
Partial derivatives with respect to Eulerian coordinate
are abbreviated as
$\partial_t=\frac{\partial}{\partial t},\ \partial_i=\frac{\partial}{\partial x_i},\ \nabla=(\partial_1,...,\partial_n)$.
The mix norm  $\| f\|_{L^p_t L^q_x}$ means
$\| f\|_{L^p_t L^q_x}=  \big\| \|f(\tau,x)\|_{L^q(\mathbb{R}^n)}  \big\|_{L^p([0,t])}.$
When $p=q$, we use the abbreviation $\|f\|_{L^p_{t,x}}$.

As in \cite{BardosSulemSulem}, we will estimate the energies of unknowns with weights. It is clear that
\begin{equation*}
\Lambda^{\pm}(t,x) = \phi(x\pm et),\quad p(t, x) = 0
\end{equation*}
are solutions of the linearized equations when viscosity $\nu=0$.
This motivates us to choose
$\phi(x\pm et)\Lambda^{\pm}(t,x)$ as unknowns
for some weight function $\phi$.
Below  we will choose $\phi(x)=\langle x\rangle^{2\mu}$ ($\frac12<\mu<\frac23$)
for higher-order energy estimate,
and $\phi(x)=\langle x\rangle^{\mu}$ for zero-order energy estimate,
in which $\langle \sigma \rangle=\sqrt{1+|\sigma|^2}$.
Such a choice of different weights is to make full use of the inherent strong degenerate structure of the system.

In the inviscid case of $\nu=0$, we define
the weighted energy for the MHD system \eqref{MHD} as follows:
\begin{align*}
E_k(t)=&
\sum_{1\leq|a|\leq k}
\int_{\mathbb{R}^n} |\langle x+et\rangle^{2\mu} \nabla^a\Lambda^{+}(x,t)|^2
+|\langle x+et\rangle^{2\mu} \nabla^a\Lambda^{-}(x,t)|^2dx \\
&+\int_{\mathbb{R}^n} |\langle x+et\rangle^{\mu}\Lambda^{+}(x,t)|^2
+|\langle x+et\rangle^{\mu}\Lambda^{-}(x,t)|^2dx.
\end{align*}
We emphasize that in the above we use different weights for lowest and higher order energies.
Such a choice will be used to take care of the pressure terms.

In the viscous case of $\nu > 0$, we introduce an extra energy of $-1$ order.
More precisely, we define the modified weighted energy by
\begin{equation*}
\mathcal{E}_k(t)=E_k(t)+
\int_{\mathbb{R}^n} \big||\nabla|^{-1} \Lambda^{+}(x,t)\big|^2
+\big||\nabla|^{-1} \Lambda^{-}(x,t)\big|^2dx.
\end{equation*}
The introduction of $-1$ order energy  will be used to kill some extra term
caused by viscosity. We remark that no weight is applied on this lowest order energy.

To make the argument of energy estimate simpler, we also introduce the two extra notations for energies:
\begin{equation*}
\widetilde{E}_k(t)=\sup_{0\leq\tau\leq t} E_k(\tau),
\quad
\widetilde{\mathcal{E}}_k(t)=\sup_{0\leq\tau\leq t} \mathcal{E}_k(\tau).
\end{equation*}
It's obviously that
\begin{equation*}
E_k(t)\leq \mathcal{E}_k(t),
\quad
\widetilde{E}_k(t)\leq \widetilde{\mathcal{E}}_k(t).
\end{equation*}

In addition, we denote the dissipative energy by
\begin{align*}
V_k(t)=&\sum_{2\leq j\leq k+1}
\nu\int_0^t\int_{\mathbb{R}^n}|\langle x+e \tau \rangle^{2\mu}\nabla^j\Lambda^+|^2
+|\langle x-e \tau \rangle^{2\mu}\nabla^j\Lambda^-|^2dxd\tau \\
&\quad+
\nu\int_0^t\int_{\mathbb{R}^n}|\langle x+e \tau \rangle^{\mu}\nabla\Lambda^+|^2
+|\langle x-e \tau \rangle^{\mu}\nabla\Lambda^-|^2 dxd\tau\\
&\quad+\nu\int_0^t\int_{\mathbb{R}^n}
|\Lambda^+|^2+|\Lambda^-|^2dxd\tau,
\end{align*}
where
\begin{equation*}
\nabla^j \Lambda^{\pm}=\{ \nabla^a \Lambda^{\pm};\ |a|= j \},\ j\in \mathbb{N}.
\end{equation*}

We define the ghost weight energy as follows:
\begin{align*}
W_k(t)=&\sum_{1\leq |a|\leq k}\big(\int_0^t\int_{\mathbb{R}^n}
\frac{|\langle x+e\tau \rangle^{2\mu} \nabla^a\Lambda^+(x,\tau)|^2}{\langle x\cdot e-\tau \rangle^{2\mu}}
+\frac{|\langle x-e\tau\rangle^{2\mu} \nabla^a\Lambda^-(x,\tau)|^2}{\langle x\cdot e+\tau \rangle^{2\mu}}dxd\tau\big) \\
&\quad +\int_0^t\int_{\mathbb{R}^n}
\frac{|\langle x+e\tau \rangle^{\mu} \Lambda^+(x,\tau)|^2}{\langle x\cdot e-\tau \rangle^{2\mu}}
+\frac{|\langle x-e\tau\rangle^{\mu} \Lambda^-(x,\tau)|^2}{\langle x\cdot e+\tau \rangle^{2\mu}}dxd\tau.
\end{align*}
The ghost weight energy plays a central role for the existence of global solutions.
Such a damping mechanism is due to the presence of steady magnetic background $e$, and technically realized by Alinhac's ghost weight method.
Together with the strong null structure of nonlinearities, this damping mechanism can be used to capture the decay of nonlinearities
in time.

In the inviscid case, we will show  following \textit{a priori}
estimate:
\begin{equation} \label{PrioEnMHD}
E_k(t)+W_k(t)\lesssim E_k(0)+ W_k(t)\widetilde{E}_k^{\frac 12}(t).
\end{equation}
In the viscous case, we will prove the following \textit{a priori}
estimate:
\begin{equation}\label{PrioEnviscoMHD}
\mathcal{E}_k(t)+V_k(t)+W_k(t)\lesssim \mathcal{E}_k(0)+ W_k(t)\widetilde{\mathcal{E}}_k^{\frac 12}(t).
\end{equation}
Once the above \textit{a priori} estimate is obtained,
one can easily prove  Theorem \ref{GlobalInvMHD} and \ref{GlobalvisMHD} by standard continuity arguments.

Throughout this paper, we use $A\lesssim B$ to denote $A\leq C B$ for some
absolute positive constant $C$, whose meaning may change from line to line.
Without specification, the constant $C$ depends only on $\mu, \ k,\ n$, but never depends on $t$ or $\nu$.

\subsection{Commutation}

For any multi-index $a\in\mathbb{N}^n$, one can easily deduce from \eqref{MHD} that
\begin{equation} \label{EqParDerMHD}
\begin{cases}
\partial_t \nabla^a \Lambda^{+}-\nu\Delta\nabla^a\Lambda^{+}- e\cdot \nabla \nabla^a\Lambda^{+}\\
\quad\quad\quad\quad +\sum_{b+c=a}C_a^b (\nabla^b\Lambda^{-}\cdot\nabla\nabla^c\Lambda^{+})
+ \nabla \nabla^a p = 0,\\[-4mm]\\
\partial_t \nabla^a \Lambda^{-}-\nu\Delta\nabla^a\Lambda^{-}+ e\cdot \nabla \nabla^a\Lambda^{-}\\
\quad\quad\quad\quad
+\sum_{b+c=a} C_a^b(\nabla^b\Lambda^{+}\cdot\nabla\nabla^c\Lambda^{-})
+ \nabla \nabla^a p = 0 \\[-4mm]\\
\nabla\cdot \nabla^a\Lambda^{+} = 0,\quad \nabla\cdot \nabla^a\Lambda^{-}=0.
\end{cases}
\end{equation}
where the binomial coefficient $C_a^b$ is give by $C_a^b=\frac{a!}{b!(a-b)!}$.
Moreover, for any $\mu\in\mathbb{R}$, we can apply weights $\langle x\pm et \rangle^{2\mu}$ to the
above equation to get that
\begin{equation} \label{EqWeightMHD}
\begin{cases}
\partial_t \langle x+e t \rangle^{2\mu}\nabla^a \Lambda^{+}
-\langle x+e t \rangle^{2\mu}\nu\Delta\nabla^a\Lambda^{+}
- e\cdot \nabla \langle x+e t \rangle^{2\mu}\nabla^a\Lambda^{+}
\\[-4mm]\\
\qquad=-\langle x+e t \rangle^{2\mu}
\big[\sum_{b+c=a}C_a^b
(\nabla^b\Lambda^{-}\cdot\nabla\nabla^c\Lambda^{+})
+ \nabla \nabla^a p\big] ,\\[-4mm]\\
\partial_t \langle x- et \rangle^{2\mu}\nabla^a \Lambda^{-}
-\langle x-e t \rangle^{2\mu}\nu\Delta\nabla^a\Lambda^{-}
+ e\cdot \nabla \langle x- et \rangle^{2\mu} \nabla^a\Lambda^{-} \\[-4mm]\\
\qquad=-\langle x- et \rangle^{2\mu} \big[\sum_{b+c=a}C_a^b
(\nabla^b\Lambda^{+}\cdot\nabla\nabla^c\Lambda^{-})
+ \nabla \nabla^a p \big].  \\[-4mm]\\
\nabla\cdot \nabla^a\Lambda^{+} = 0,\quad \nabla\cdot \nabla^a\Lambda^{-}=0.
\end{cases}
\end{equation}
We will perform energy estimate for \eqref{EqParDerMHD} and \eqref{EqWeightMHD}.
We remark that the weights $\langle x\pm et \rangle^{2\mu}$ commute with
the hyperbolic operators $\partial_t\pm e\cdot \nabla$, while they don't commute with the Laplacian operator.

\subsection{An Elementary Imbedding Estimate}
In the following, we state a simple imbedding inequality. It is just a consequence of
the standard Sobolev imbedding theorem $H^{[\frac{n}{2}]+1}(\mathbb{R}^n) \hookrightarrow L^\infty(\mathbb{R}^n)$.
\begin{lem}\label{LemSoboWei1}
For any multi-index $a$ and any $\lambda\geq 0$, $\mu\geq 0$, there hold
\begin{align}
&\| \langle x\pm et\rangle^{\lambda} f (t,x)\|_{L^\infty_x}
\lesssim \sum_{|b|\leq[\frac{n}{2}]+1}\| \langle x\pm et\rangle^{\lambda} \nabla^b f (t,x)\|_{L^2_x}, \label{SobWeit1}\\
&\big\| \frac{\langle x\pm et\rangle^{\lambda} f (t,x)}
{\langle e\cdot x\mp t\rangle^{\mu}}\big\|_{L^\infty_x}
\lesssim \sum_{|b|\leq[\frac{n}{2}]+1}
\big\| \frac{\langle x+et\rangle^{\lambda}
\nabla^bf (t,x)}{\langle e\cdot x\mp t\rangle^{\mu}} \big\|_{L^2_x} \label{SobWeit2}.
\end{align}
provided the right hand side is finite.
\end{lem}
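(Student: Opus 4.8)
The plan is to deduce both inequalities directly from the scalar Sobolev embedding $H^{[\frac n2]+1}(\mathbb{R}^n)\hookrightarrow L^\infty(\mathbb{R}^n)$, applied at each frozen time $t$ to the weighted functions
\[
g(x):=\langle x\pm et\rangle^{\lambda}f(t,x),\qquad
h(x):=\frac{\langle x\pm et\rangle^{\lambda}}{\langle e\cdot x\mp t\rangle^{\mu}}\,f(t,x).
\]
For fixed $t$ the map $x\mapsto x\pm et$ is a translation and $x\mapsto e\cdot x\mp t$ is affine with $|\nabla(e\cdot x\mp t)|=|e|=1$, so one expects every $x$-derivative of these weights to be controlled pointwise by the weights themselves; this is the only genuine point to check, after which the two estimates follow at once.

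First I would record the elementary pointwise bounds: for every multi-index $\beta$,
\[
\big|\nabla^\beta\langle x\pm et\rangle^{\lambda}\big|\lesssim\langle x\pm et\rangle^{\lambda-|\beta|}\le\langle x\pm et\rangle^{\lambda},
\qquad
\big|\nabla^\beta\langle e\cdot x\mp t\rangle^{-\mu}\big|\lesssim\langle e\cdot x\mp t\rangle^{-\mu-|\beta|}\le\langle e\cdot x\mp t\rangle^{-\mu},
\]
which hold because $\lambda,\mu\ge0$ and $\langle\,\cdot\,\rangle\ge1$ (the implicit constants depending only on $\lambda,\mu,n$): the first is obtained by repeatedly differentiating $(1+|x\pm et|^2)^{\lambda/2}$, the second by the chain rule for the affine map $x\mapsto e\cdot x\mp t$ together with $|\partial_s^m\langle s\rangle^{-\mu}|\lesssim\langle s\rangle^{-\mu-m}$ and $|e|=1$. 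By the Leibniz rule these bounds give, for the combined weight $W(x):=\langle x\pm et\rangle^{\lambda}\langle e\cdot x\mp t\rangle^{-\mu}$, that $|\nabla^\beta W|\lesssim W$ for every $\beta$.

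Next, for $|b|\le[\frac n2]+1$ I would expand by the Leibniz rule
\[
\nabla^b g=\sum_{\beta+\gamma=b}\binom{b}{\beta}\big(\nabla^\beta\langle x\pm et\rangle^{\lambda}\big)\nabla^\gamma f,
\]
take $L^2_x$ norms, and apply the first pointwise bound to obtain $\|\nabla^b g\|_{L^2_x}\lesssim\sum_{|c|\le[\frac n2]+1}\|\langle x\pm et\rangle^{\lambda}\nabla^c f\|_{L^2_x}$; combined with $\|g\|_{L^\infty_x}\lesssim\sum_{|b|\le[\frac n2]+1}\|\nabla^b g\|_{L^2_x}$ this is exactly \myref{SobWeit1}. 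The estimate \myref{SobWeit2} follows by the same computation with $g$ replaced by $h$ and the weight $\langle x\pm et\rangle^{\lambda}$ replaced by $W$, using $|\nabla^\beta W|\lesssim W$ in place of the first pointwise bound. Note that the assumed finiteness of the right-hand sides already forces $g,h\in H^{[\frac n2]+1}(\mathbb{R}^n)$ (its $|b|=0$ term being precisely the $L^2$ bound on $g$, resp. $h$), so the Sobolev embedding is applied to bona fide Sobolev functions and no further approximation is needed.

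I do not expect a serious obstacle: morally the lemma just says the Japanese-bracket weights $\langle x\pm et\rangle^{\lambda}$ and $\langle e\cdot x\mp t\rangle^{-\mu}$ behave like symbols of orders $\lambda$ and $-\mu$ under differentiation, and hence commute harmlessly with the Sobolev embedding. The one spot that needs a little care is the pointwise weight-derivative estimate displayed above, which is a routine induction on $|\beta|$.
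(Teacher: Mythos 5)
Your proof is correct and follows essentially the same route as the paper: apply the scalar embedding $H^{[\frac n2]+1}(\mathbb{R}^n)\hookrightarrow L^\infty(\mathbb{R}^n)$ at frozen time to the weighted function, then commute the derivatives past the weights via the Leibniz rule and the symbol-type bounds $|\nabla^\beta\langle x\pm et\rangle^{\lambda}|\lesssim\langle x\pm et\rangle^{\lambda}$, $|\nabla^\beta\langle e\cdot x\mp t\rangle^{-\mu}|\lesssim\langle e\cdot x\mp t\rangle^{-\mu}$. The paper leaves this commutation step implicit; you have simply written out the details it suppresses.
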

\begin{proof}
Firstly, by $H^{[\frac{n}{2}]+1}(\mathbb{R}^{n})
\hookrightarrow L^\infty( \mathbb{R}^{n} )$,
we have
\begin{align*}
\| \langle x+et\rangle^{\lambda} f (t,x)\|_{L^\infty_x}
&\lesssim \sum_{|b|\leq [\frac{n}{2}]+1}
\| \nabla^b \big(\langle x+et\rangle^{\lambda} f (t,x)\big)\|_{ L^2_x} \\
&\lesssim \sum_{|b|\leq[\frac{n}{2}]+1}\| \langle x+et\rangle^{\lambda} \nabla^b f (t,x)\|_{L^2_x}.
\end{align*}
Then \eqref{SobWeit1} follows from the above estimate.

The proof of \eqref{SobWeit2} is similar. By Sobolev imbedding
$H^{[\frac{n}{2}]+1}(\mathbb{R}^{n})
\hookrightarrow L^\infty( \mathbb{R}^{n} )$,
one gets
\begin{align*}
\big\| \frac{\langle x\pm et\rangle^{\lambda} f (t,x)}
{\langle e\cdot x\mp t\rangle^{\mu}}\big\|_{L^\infty_x}
&\lesssim \sum_{|b|\leq [\frac{n}{2}]+1}
\big\| \nabla^b\Big(\frac{\langle x\pm et\rangle^{\lambda} f (t,x)}
{\langle e\cdot x\mp t\rangle^{\mu}}\Big)\big\|_{L^2_tL^2_x}  \\
&\lesssim \sum_{|b|\leq[\frac{n}{2}]+1}
\big\| \frac{\langle x+et\rangle^{\lambda}
\nabla^bf (t,x)}{\langle e\cdot x\mp t\rangle^{\mu}} \big\|_{L^2_x}.
\end{align*}
This ends the proof of the lemma.
\end{proof}

\section{Estimate for the Pressure}
In this section, we are going to estimate the pressure gradient,
which is treated as a nonlinear term.
One key point of following estimate is that
the pressure gradient always keeps the interaction between $\Lambda^+$ and $\Lambda^-$,
which means that we have the strong null structure in the pressure term as in \cite{Lei14}.

A surprising effect of Lemma \ref{LemPreWei3} presented bellow is that although $\langle x\rangle^{4\mu}$ $(1/2<\mu<2/3)$ is not a
$\mathcal{A}_2$ weight in $\mathbb{R}^2$, we can still pass the weight
$\langle x\pm et \rangle^{4\mu}$
through the Riesz transform for pressure in $\mathbb{R}^n$ $(n\geq 2)$.
This surprising properties are based on the inherent strong degenerate structure of the system
and the choice of different weights on differen orders of energy estimates.
We emphasize that in three and higher dimensions,
the results in the following lemmas are standard, and can be deduced from the classical weighted Calderon-Zygmund theory.

One should note that we impose weights $\langle x\pm et\rangle ^{2\mu}$ on the higher-order energy estimate and apply weights $\langle x\pm et\rangle ^{\mu}$ on the zero-order one.
Thus we need be careful when dealing with the unknowns
with derivatives and ones without derivative.

We first give a preliminary estimate for the pressure, which is a direct
consequence of Calderon-Zygmund theory with $\mathcal{A}_p$ weights (see \cite{Stein}).
We remark the $\mathcal{A}_p$ weight has translation invariance property,
which guarantee that the constant in the following lemma is independent of $t$.
\begin{lem}\label{lemmaPresWei1}
Let $k\geq n+3$, $\frac12<\mu< \frac23$. Then for any index $a$ satisfying $0\leq |a|\leq k$, there hold
\begin{align}
&\| p \|_{L^1_t L^2_x}
\leq C W_{k}(t),\label{PresWei0} \\[-4mm]\nonumber\\
&\|\langle x\pm et \rangle^{\mu}\nabla^a p \|_{L^2_{t,x}}
\leq C\widetilde{E}_{k}^\frac{1}{2}(t) W_{k}^\frac{1}{2}(t). \label{PresWei1}
\end{align}
Moreover, for any index $a$ satisfying $0\leq |a|\leq k-1$, there hold
\begin{align}
&\big\|\langle x\pm e t \rangle^{2\mu-1}\langle x\mp et \rangle^{\mu}
\nabla \nabla^a p \big\|_{L^2_{t,x}}
\leq CW_{k}^{\frac 12}(t) \widetilde{E}_{k}^{\frac 12}(t), \label{PresWei2} \\
&\big\|\langle x\pm e t \rangle^{3\mu-1}
\nabla \nabla^a p \big\|_{L^2_{t,x}}
\leq CW_{k}^{\frac 12}(t) \widetilde{E}_{k}^{\frac 12}(t), \label{PresWei3}
\end{align}
where $C$ is a constant which depends on $a$, $\mu$ and $n$ but
doesn't depend on $t$ or viscosity $\nu$.
\end{lem}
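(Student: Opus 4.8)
The plan is to recover the pressure from the equations by taking divergence, estimate the resulting Riesz-type operator in weighted $L^2$, and then carefully track which weights are admissible. Applying $\nabla\cdot$ to the $\Lambda^+$-equation (or $\Lambda^-$-equation) of \eqref{MHD} and using the divergence-free conditions, one gets $-\Delta p = \nabla\cdot(\Lambda^-\cdot\nabla\Lambda^+) = \nabla^i\nabla^j(\Lambda^{-,j}\Lambda^{+,i})$, so that $p = (-\Delta)^{-1}\nabla^i\nabla^j(\Lambda^{-,j}\Lambda^{+,i})$ and $\nabla^a p = R_iR_j\,\nabla^a(\Lambda^{-,j}\Lambda^{+,i})$, a composition of Riesz transforms applied to a quadratic term which, crucially, always contains the interaction of $\Lambda^+$ with $\Lambda^-$ (the strong null structure in the pressure). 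The first step is therefore to write each $\nabla^a p$ in this form, distributing $\nabla^a$ across the product by Leibniz so that in each summand one factor carries most of the derivatives; after possibly integrating by parts one derivative in the $R_iR_j$ operator, one arranges that at least one factor is always differentiated at least once, which is what lets the ghost weight energy $W_k$ enter.

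For \eqref{PresWei0}, I would take $\nabla^a=\mathrm{id}$: $p = R_iR_j(\Lambda^{-,j}\Lambda^{+,i})$, and since $R_iR_j$ is bounded on $L^2$, $\|p\|_{L^2_x}\lesssim \|\Lambda^-\Lambda^+\|_{L^2_x}$; writing one factor in $L^\infty_x$ via the unweighted case of Lemma \ref{LemSoboWei1} and bounding it together with the weight $\langle x\mp et\rangle^{-\mu}$, and keeping the other factor weighted and in $L^2_x$, the product sits in $W_k$-type quantities (note $\langle x+et\rangle^\mu\langle x-et\rangle^\mu \gtrsim \langle e\cdot x - t\rangle^\mu \langle e\cdot x+t\rangle^\mu$ and the two ghost denominators combine), and integrating in time over $[0,t]$ produces $\|p\|_{L^1_tL^2_x}\lesssim W_k(t)$. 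For \eqref{PresWei1}, one must commute $\langle x\pm et\rangle^\mu$ past $R_iR_j$; since $\langle x\rangle^{2\mu}$ with $\tfrac12<\mu<\tfrac23$ is an $\mathcal{A}_2$ weight (the exponent $2\mu\in(1,4/3)$ lies in the admissible range $(-n,n(p-1))=(-n,n)$ for $n\ge 2$, and $\mathcal{A}_2$-membership is translation invariant so the constant is $t$-independent), weighted Calderón–Zygmund theory gives $\|\langle x\pm et\rangle^\mu \nabla^a p\|_{L^2_x}\lesssim \|\langle x\pm et\rangle^\mu \nabla^a(\Lambda^-\Lambda^+)\|_{L^2_x}$; then Leibniz, Lemma \ref{LemSoboWei1}, and splitting the weight $\langle x\pm et\rangle^\mu$ with one ghost factor $\langle e\cdot x\mp t\rangle^{-\mu}$ against the differentiated factor (which lives in $W_k^{1/2}$) and the remaining $\langle x\mp et\rangle^\mu$-weighted factor against the other (which lives in $\widetilde E_k^{1/2}$, here using that the zero-order energy carries exactly weight $\mu$) closes the estimate after integrating in time and applying Cauchy–Schwarz in $t$.

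For \eqref{PresWei2} and \eqref{PresWei3} the point is that we now need the \emph{higher} weight $\langle x\pm et\rangle^{2\mu}$ on a $k$-th derivative object, but $\langle x\rangle^{4\mu}$ is \emph{not} $\mathcal{A}_2$ in $\mathbb{R}^2$ when $\mu>\tfrac12$ (the exponent $4\mu>2=n$). The resolution — the ``surprising'' part advertised before the lemma — is to not push the full weight $\langle x\pm et\rangle^{2\mu}$ through the Riesz transform on the top-order term. Instead, for $|a|\le k-1$ one has one derivative to spare: write $\nabla\nabla^a p = \nabla^a\nabla R_iR_j(\Lambda^{-,j}\Lambda^{+,i})$ and split $\nabla\nabla^a$ so that in the ``bad'' summand the full $k$ derivatives land on, say, $\Lambda^+$, giving a factor $\nabla^{a+1}\Lambda^+$ (order $\le k$) paired with $\Lambda^-$ undifferentiated; then one only needs to commute the \emph{reduced} weight — a product like $\langle x+et\rangle^{2\mu-1}\langle x-et\rangle^\mu$ (total exponent $2\mu-1+\mu=3\mu-1<1=$ half of $n$, comfortably $\mathcal{A}_2$) or $\langle x+et\rangle^{3\mu-1}$ (exponent $3\mu-1<1$) — through $R_iR_j$, which is legitimate. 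After the commutation, bound $\Lambda^-$ (or its low-order derivatives) in $L^\infty_x$ with weight $\langle x-et\rangle^\mu\langle e\cdot x+t\rangle^{-\mu}$ via Lemma \ref{LemSoboWei1} putting it in $W_k^{1/2}$, and keep $\langle x+et\rangle^{2\mu}\nabla^{a+1}\Lambda^+$ in $L^2_x$ in $\widetilde E_k^{1/2}$; the remaining weight bookkeeping (the factor $\langle x+et\rangle^{2\mu-1}$ versus $\langle x+et\rangle^{2\mu}$, the cross weights $\langle x\pm et\rangle$ dominating $\langle e\cdot x\mp t\rangle$) is routine. The summands where derivatives are more evenly distributed are easier since no factor reaches top order and Sobolev embedding absorbs them.

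I expect the main obstacle to be precisely the weight arithmetic at top order in \eqref{PresWei2}–\eqref{PresWei3}: one must verify that every weight actually commuted through a Riesz transform has exponent strictly below $n$ (so $\mathcal{A}_2$), that the leftover weights combine with the ghost denominators with room to spare, and that the derivative count on each factor never exceeds $k$ while still leaving at least one derivative on one factor — this interlocking of the $\mathcal{A}_2$ constraint $\mu<\tfrac23$, the null structure, and the two-tier weight scheme ($\mu$ on zero order, $2\mu$ on higher order) is the whole content of the lemma.
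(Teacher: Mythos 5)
Your treatment of \eqref{PresWei0} and \eqref{PresWei1} matches the paper's: $L^2$ (respectively $\mathcal{A}_2$-weighted) boundedness of $\nabla_i\nabla_j(-\Delta)^{-1}$ applied to the quadratic interaction $\Lambda^-\otimes\Lambda^+$, followed by the splitting $\langle x+et\rangle^{\mu}\leq \langle x-et\rangle^{\mu}\,\langle x+et\rangle^{\mu}\langle e\cdot x-t\rangle^{-\mu}$ that routes the ghost denominator to the sign-matched factor and the $L^2_x$/$L^\infty_x$ (and $L^2_t$/$L^\infty_t$) split according to the derivative count. You have also correctly identified the central device for \eqref{PresWei2}--\eqref{PresWei3}: commute only the reduced weights, whose squared exponent $6\mu-2<n$ keeps them in the $\mathcal{A}_2$ class.

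There is, however, a genuine gap in your treatment of \eqref{PresWei2}--\eqref{PresWei3}. You distribute all $|a|+1$ derivatives over the product $\Lambda^{-}\Lambda^{+}$ by plain Leibniz and examine only the summand in which they all land on $\Lambda^{+}$, dismissing the rest as easier. The opposite extreme summand, $\nabla_i\nabla_j(-\Delta)^{-1}\big(\nabla^{a+1}\Lambda^{-}\cdot\Lambda^{+}\big)$ with $\Lambda^{+}$ undifferentiated, is not easier: for \eqref{PresWei3} with the $+$ sign it breaks the weight bookkeeping. The undifferentiated $\Lambda^{+}$ is controlled in the energy only with the zero-order weight $\langle x+et\rangle^{\mu}$, while $\nabla^{a+1}\Lambda^{-}$ carries only $\langle x-et\rangle^{2\mu}$; in the region $x\approx et$ with $t$ large, the target weight $\langle x+et\rangle^{3\mu-1}\sim t^{3\mu-1}$ must be absorbed by $\langle x-et\rangle^{2\mu}\sim 1$ times $\langle x+et\rangle^{\mu}\sim t^{\mu}$ (the ghost denominators only lose more there), and this fails since $3\mu-1>\mu$ for $\mu>\tfrac12$. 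The paper's proof eliminates this summand before it appears: using $\nabla_i\Lambda^{-}_i=0$ it rewrites $\nabla_i\nabla_j(\Lambda^{-}_i\Lambda^{+}_j)=\nabla_j(\Lambda^{-}_i\nabla_i\Lambda^{+}_j)$ \emph{prior} to Leibniz, so that every summand has the form $\nabla^{b}\Lambda^{-}_i\,\nabla_i\nabla^{c}\Lambda^{+}_j$ with $\Lambda^{+}$ carrying at least one and at most $|c|+1\leq k$ derivatives, hence the full $\langle x+et\rangle^{2\mu}$-weighted energy; the inequality $3\mu-1\leq 2\mu$ then closes the estimate (with the symmetric rewriting via $\nabla_j\Lambda^{+}_j=0$ for the $-$ sign). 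Your opening paragraph does allude to ``integrating by parts one derivative in the $R_iR_j$ operator so that at least one factor is differentiated,'' but the requirement is sharper: the differentiated factor must be the one whose weight anchor matches the sign of the output weight, and your ``say, $\Lambda^{+}$'' together with the subsequent plain Leibniz expansion shows this is not pinned down.
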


\begin{proof}
We first treat \eqref{PresWei0}. By taking divergence of the
first equation of \eqref{MHD}, one gets
\begin{align*}
-\Delta p=
\nabla_i\nabla_j (\Lambda^{-}_i \Lambda^{+}_j).
\end{align*}
Hence by the $L^2$ boundedness of the Riesz operator, we have
\begin{align*}
\|p\|_{L^1_tL^2_x}&\leq \big\| |\Lambda^{+}| |\Lambda^{-}| \big\|_{L^1_tL^2_x}\nonumber\\[-4mm]\nonumber\\
&\leq \int_0^t \big\|
\frac{|\langle x+e\tau \rangle^{\mu}\Lambda^{+}|}{\langle e\cdot x-\tau \rangle^{\mu}}
\frac{ |\langle x-e\tau \rangle^{\mu}\Lambda^{-}|}{\langle e\cdot x+\tau \rangle^{\mu}} \big\|_{L^2_x}d\tau\nonumber\\[-4mm]\nonumber\\
&\leq  \big\|
\frac{|\langle x+e\tau \rangle^{\mu}\Lambda^{+}|}{\langle e\cdot x-\tau \rangle^{\mu}} \big\|_{L^2_tL^2_x}
\big\|\frac{ |\langle x-e\tau \rangle^{\mu}\Lambda^{-}|}{\langle e\cdot x+\tau \rangle^{\mu}} \big\|_{L^2_tL^\infty_x}
\lesssim W_k(t).
\end{align*}

We now take care of \eqref{PresWei1}.
Since the estimate for $\langle x + et \rangle^{\mu}\nabla^a p$ is the same to $\langle x - et \rangle^{\mu}\nabla^a p$,
we only treat the former one.

By taking divergence of the
first equation of \eqref{EqParDerMHD}. one gets
\begin{align} \label{Pressure1}
-\Delta \nabla^a p=
\sum_{b+c=a}C_a^b
\nabla_i\nabla_j (\nabla^b\Lambda^{-}_i \nabla^c\Lambda^{+}_j).
\end{align}
Noting the  fact that if $\frac12<\mu<\frac 23$, then $\langle x\rangle^{2\mu}$ belongs to $\mathcal{A}_2$
class in $\mathbb{R}^n\ (n\geq2)$(see \cite{ Stein}). Hence,
we can deduce from \eqref{Pressure1} that
\begin{align*}
&\|\langle x+ et \rangle^{\mu}\nabla^a p \|_{L^2_{x}}\nonumber\\[-4mm]\nonumber\\
&\leq \sum_{b+c=a} C_a^b\| \langle x+ et \rangle^{\mu}
\nabla_i\nabla_j (-\Delta)^{-1}
(\nabla^b\Lambda^{-}_i \nabla^c\Lambda^{+}_j)\|_{L^2_{x}}\nonumber\\
&\lesssim
\sum_{b+c=a}\big\| \langle x+ et \rangle^{\mu}
|\nabla^b\Lambda^{-}| |\nabla^c\Lambda^{+}|\big\|_{L^2_{x}}\nonumber\\
&\leq
\sum_{b+c=a}\| \langle x- et \rangle^{\mu} |\nabla^b\Lambda^{-}|
\frac{\langle x+ et \rangle^{\mu}|\nabla^c\Lambda^{+}|}
   {\langle e\cdot x- t \rangle^{\mu}}\|_{L^2_{x}} \nonumber\\
&\leq\sum_{\tiny\begin{matrix}b+c=a\\ |b|\geq |c|\end{matrix}}
\| \langle x- et \rangle^{\mu} \nabla^b\Lambda^{-} \|_{L^2_x}
\|\frac{\langle x+ et \rangle^{\mu}\nabla^c\Lambda^{+}}
   {\langle e\cdot x- t \rangle^{\mu}} \|_{L^\infty_x}\nonumber\\
&\quad +\sum_{\tiny\begin{matrix}b+c=a\\ |b|< |c|\end{matrix}}
\| \langle x- et \rangle^{\mu} \nabla^b\Lambda^{-}\|_{L^\infty_{x}}
\|\frac{\langle x+ et \rangle^{\mu}\nabla^c\Lambda^{+}}
   {\langle e\cdot x- t \rangle^{\mu}}\|_{L^2_{x}}.
\end{align*}
If $|b|\geq |c|$, then $|c|\leq \frac{k}{2}$. By the assumption that
$k\geq n+3$, one infers  $|c|+[n/2]+1\leq k$.
Otherwise if $|b|\leq |c|$, one also infers $|b|+[n/2]+1\leq k$.
Hence for the above estimate, taking the $L^2$ norm in time on
$[0,t)$ and employing Lemma \ref{LemSoboWei1}, one deduces that
\begin{align}\label{b1}
\|\langle x+ e\tau \rangle^{\mu}\nabla^a p \|_{L^2_t L^2_{x}} &\lesssim \sum_{\tiny\begin{matrix}|b|,|a|\leq k\\ |c|\leq\frac{k}{2} \end{matrix}}
\| \langle x- et \rangle^{\mu} \nabla^b\Lambda^{-} \|_{L^\infty_t L^2_x}
\|\frac{\langle x+ et \rangle^{\mu}\nabla^c\Lambda^{+}}
   {\langle e\cdot x- t \rangle^{\mu}} \|_{L^2_t L^\infty_x}\nonumber\\
&\quad +\sum_{\tiny\begin{matrix}|c|,|a|\leq k\\ |b|\leq\frac{k}{2}\end{matrix}}
\| \langle x- et \rangle^{\mu} \nabla^b\Lambda^{-}\|_{L^\infty_tL^\infty_{x}}
\|\frac{\langle x+ et \rangle^{\mu}\nabla^c\Lambda^{+}}
   {\langle e\cdot x- t \rangle^{\mu}}\|_{L^2_t L^2_{x}}\nonumber\\
&\lesssim W_{k}^{\frac 12}(t) \widetilde{E}_{k}^{\frac 12}(t).
\end{align}
Thus \eqref{PresWei1} is obtained.

To estimate \eqref{PresWei2},
one can organize \eqref{Pressure1} as
\begin{align*} \label{Pressure2}
-\Delta \nabla^a p=
\sum_{b+c=a}C_a^b
\nabla_j (\nabla^b\Lambda^{-}_i \nabla_i\nabla^c\Lambda^{+}_j).
\end{align*}
Note that $\langle x+ e t \rangle^{6\mu-2}$ and $\langle x- e t \rangle^{6\mu-2}$
belong to $\mathcal{A}_2$ class since $0<6\mu-2<2$. Interpolating between the above $\mathcal{A}_2$ weights \cite{Stein}, we infer that
\begin{equation*}
\langle x\pm e t \rangle^{4\mu-2} \langle x\mp et \rangle^{2\mu}
\end{equation*}
also belongs to $\mathcal{A}_2$ class.
Hence one gets
\begin{align*}
&\|\langle x+ et \rangle^{2\mu-1}\langle x-et \rangle^{\mu}\nabla \nabla^a p \|_{L^2_{x}}\nonumber\\[-4mm]\nonumber\\
&\leq \sum_{b+c=a}C_a^b\| \langle x+ et \rangle^{2\mu-1}\langle x-et \rangle^\mu
\nabla\nabla_j (-\Delta)^{-1}
(\nabla^b\Lambda^{-}_i \nabla_i\nabla^c\Lambda^{+}_j)\|_{L^2_{x}}\nonumber\\
&\lesssim
\sum_{b+c=a}\big\| \langle x+ et \rangle^{2\mu-1}\langle x-et \rangle^{\mu}
|\nabla^b\Lambda^{-}| |\nabla^{|c|+1}\Lambda^{+}|\big\|_{L^2_{x}}\nonumber\\
&\lesssim
\sum_{b+c=a}
\big\| \frac{\langle x- et \rangle^{\mu}|\nabla^b\Lambda^{-}|}
            {\langle e\cdot x+t \rangle^{\mu}}
\langle x+et \rangle^{3\mu-1} |\nabla^{|c|+1}\Lambda^{+}| \big\|_{L^2_{x}}.
\end{align*}
By the fact that
$$3\mu-1\leq 2\mu$$
for $\frac 12<\mu<\frac 23$.
We have
\begin{align*}
&\|\langle x+ et \rangle^{2\mu-1}\langle x-et \rangle^{\mu}\nabla \nabla^a p \|_{L^2_{x}}\nonumber\\[-4mm]\nonumber\\
&\lesssim\sum_{b+c=a}
\big\| \frac{\langle x- et \rangle^{\mu}|\nabla^b\Lambda^{-}|}
            {\langle e\cdot x+t \rangle^{\mu}}
\langle x+et \rangle^{2\mu} |\nabla^{|c|+1}\Lambda^{+}| \big\|_{L^2_{x}}.
\end{align*}
Similar to the estimate in \eqref{b1},
taking the $L^2$ norm in time on
$[0,t)$ and employing Lemma \ref{LemSoboWei1}, we immediately have the second estimate \eqref{PresWei2}.

The estimate for \eqref{PresWei3} is similar to that for \eqref{PresWei2}.
Since $\langle x+ e t \rangle^{6\mu-2}$ belongs to $\mathcal{A}_2$ class for $\frac 12<\mu<\frac 23$, we calculate that
\begin{align*}
&\|\langle x+ et \rangle^{3\mu-1}\nabla \nabla^a p \|_{L^2_{x}}\nonumber\\[-4mm]\nonumber\\
&\leq \sum_{b+c=a}C_a^b\| \langle x+ et \rangle^{3\mu-1}
\nabla\nabla_j (-\Delta)^{-1}
(\nabla^b\Lambda^{-}_i \nabla_i\nabla^c\Lambda^{+}_j)\|_{L^2_{x}}\nonumber\\
&\lesssim
\sum_{b+c=a}\big\| \langle x+ et \rangle^{3\mu-1}
|\nabla^b\Lambda^{-}| |\nabla^{|c|+1}\Lambda^{+}|\big\|_{L^2_{x}}\nonumber\\
&\lesssim
\sum_{b+c=a}
\big\| \langle x- et \rangle^{\mu}|\nabla^b\Lambda^{-}|
\frac{\langle x+et \rangle^{3\mu-1}}{\langle e\cdot x-t \rangle^{\mu}} |\nabla^{|c|+1}\Lambda^{+}| \big\|_{L^2_{x}}.
\end{align*}
Taking the $L^2$ norm in time on
$[0,t)$ and employing Lemma \ref{LemSoboWei1} and similar techniques as in \eqref{b1}, we have the estimate \eqref{PresWei3}.
\end{proof}

Now we are going to show the main weighted estimate for the pressure.
\begin{lem}\label{LemPreWei3}
Let $\frac12<\mu< \frac 23$, $k\geq n+3$. Then for any $0\leq |a|\leq k-1$, there holds
\begin{align*}
\big\|\langle x\pm et \rangle^{2\mu} \nabla\nabla^a p \big\|_{L^2_{t,x}}
\leq CW_k^{\frac 12}(t) \widetilde{E}_k^{\frac 12}(t),
\end{align*}
where $C$ is a constant which depends on $a$, $\mu$ and $n$ but
doesn't depend on $t$ or viscosity $\nu$.
\end{lem}

\begin{proof}
We only take care of the positive sign case. The negative one can be treated similarly.

Employing integration by parts, we write
\begin{align}
&\|\langle x+e t \rangle^{2\mu}
\nabla\nabla^a p\|^2_{L^2}\nonumber\\
&=-\int_{\mathbb{R}^n} \nabla(\langle x+e t \rangle^{4\mu}
 \nabla^a p\cdot\nabla\nabla^a p dx
-\int_{\mathbb{R}^n} \langle x+e t \rangle^{4\mu}
\nabla^ap\Delta \nabla^a p dx\nonumber\\
&=-\int_{\mathbb{R}^n} \nabla(\langle x+e t \rangle^{4\mu}
 \nabla^a p\cdot\nabla\nabla^a p dx\nonumber\\
&\quad
+\sum_{b+c=a}C_a^b \int_{\mathbb{R}^n} \langle x+e t \rangle^{4\mu}
\nabla^ap   \nabla_j (\nabla^b\Lambda^{-}_i \nabla_i\nabla^c\Lambda^{+}_j)dx \nonumber\\
&=-\int_{\mathbb{R}^n} \nabla(\langle x+e t \rangle^{4\mu}
 \nabla^a p\cdot\nabla\nabla^a p dx\nonumber\\
&\quad
 -\sum_{b+c=a}C_a^b\int_{\mathbb{R}^n} \nabla_j\langle x+e t \rangle^{4\mu}
\nabla^ap   (\nabla^b\Lambda^{-}_i \nabla_i\nabla^c\Lambda^{+}_j)dx \nonumber\\
&\quad -\sum_{b+c=a}C_a^b\int_{\mathbb{R}^n} \langle x+e t \rangle^{4\mu}
\nabla_j\nabla^ap   (\nabla^b\Lambda^{-}_i \nabla_i\nabla^c\Lambda^{+}_j)dx.
\end{align}
For the last line in the above, we have
\begin{align*}
&-\sum_{b+c=a}C_a^b\int_{\mathbb{R}^n} \langle x+e t \rangle^{4\mu}
\nabla_j\nabla^ap   (\nabla^b\Lambda^{-}_i \nabla_i\nabla^c\Lambda^{+}_j)dx \\
&\leq \frac12 \|\langle x+e t \rangle^{2\mu}\nabla\nabla^ap\|^2_{L^2_x}
+\frac12\sum_{b+c=a}(C_a^b)^2\big\|\langle x+e t \rangle^{2\mu} |\nabla^b\Lambda^{-}| |\nabla^{|c|+1}\Lambda^{+}| \big\|^2_{L^2_x}.
\end{align*}
Consequently, we obtain
\begin{align*}
&\|\langle x+e t \rangle^{2\mu}
\nabla\nabla^a p\|^2_{L^2}\nonumber\\[-4mm]\nonumber\\
\leq &-2\int_{\mathbb{R}^n} \nabla(\langle x+e t \rangle^{4\mu}
 \nabla^a p\cdot\nabla\nabla^a p dx\nonumber\\
&-2\sum_{b+c=a}C_a^b\int_{\mathbb{R}^n} \nabla_j\langle x+e t \rangle^{4\mu}
\nabla^ap   (\nabla^b\Lambda^{-}_i \nabla_i\nabla^c\Lambda^{+}_j)dx \nonumber\\
&+\sum_{b+c=a}C_a^b\big\|\langle x+e t \rangle^{2\mu} |\nabla^b\Lambda^{-}| |\nabla^{|c|+1}\Lambda^{+}| \big\|^2_{L^2_x}.
\end{align*}
Taking the $L^1$ norm in time on
$[0,t)$ and employing Lemma \ref{LemSoboWei1}, Lemma \ref{lemmaPresWei1},
we have
\begin{align*}
\|\langle x+e t \rangle^{2\mu}
\nabla\nabla^a p\|^2_{L^2_{t,x}}
&\lesssim \|\langle x+e t \rangle^{\mu}\nabla^a p\|^2_{L^2_{t,x}}+
\|\langle x+e t \rangle^{3\mu-1}\nabla\nabla^a p\|^2_{L^2_{t,x}}
\nonumber\\[-4mm]\nonumber\\
&\quad +\sum_{b+c=a}\big\||\langle x+e t \rangle^{2\mu} |\nabla^b\Lambda^{-}| |\nabla^{|c|+1}\Lambda^{+}| \big\|^2_{L^2_{t,x}}\\
&\lesssim \widetilde{E}_{k}(t) W_{k}(t).
\end{align*}
Here we used similar techniques as in \eqref{b1}. Thus the desired estimate is obtained.
\end{proof}

\section{Energy Estimate without Viscosity}

In this section, we present the energy estimate
for the MHD system without viscosity.
One essential point is the refinement of Alinhac's ghost weight,
which is responsible for existence of the global solutions.
We split the proof into higher-order ($k\geq 1$) energy estimate and
zero-order ($k=0$) energy estimate.
The main difference is that, in the high-order energy estimate, we use the weights $\langle x\pm et\rangle^{2\mu}$,
but in the zero-order energy estimate, we use the weights $\langle x\pm et\rangle^{\mu}$.

\subsection{Higher-order Energy Estimate}
This subsection is devoted  to the
high-order $(k\geq 1)$ energy estimate.
Let $1/2<\mu < 2/3$, $q(s)=\int_0^s \langle\tau\rangle^{-2\mu} d\tau$,
thus $|q(s)|\lesssim 1$.
Let $\sigma^{\pm} =\pm e \cdot x- t$, $1\leq |a|\leq k\leq n+3$.
Note that in this case the viscosity parameter $\nu$ is zero.
Taking space $L^2$ inner product of the first and the second equations of \eqref{EqWeightMHD} with
$\langle x+e t \rangle^{2\mu} \nabla^{a}\Lambda^{+}e^{q(\sigma^+)}$
and $\langle x- et \rangle^{2\mu} \nabla^a\Lambda^{-}e^{q(\sigma^-)}$, respectively,
then adding them up, we have
\begin{align*}
&\frac 12 \partial_t\int_{\mathbb{R}^n}
|\langle x+e t \rangle^{2\mu} \nabla^a \Lambda^{+}|^2e^{q(\sigma^+)}
+|\langle x- et \rangle^{2\mu} \nabla^a \Lambda^{-}|^2e^{q(\sigma^-)} dx  \\
&+\int_{\mathbb{R}^n} \frac{|\langle x+e t \rangle^{2\mu}
\nabla^a\Lambda^{+}|^2}{\langle e\cdot x-t \rangle^{2\mu}}e^{q(\sigma^+)}
+ \frac{|\langle x- et \rangle^{2\mu} \nabla^a\Lambda^{-}|^2}
{\langle e\cdot x+t \rangle^{2\mu}} e^{q(\sigma^-)}dx\\
&=-\int_{\mathbb{R}^n}
\big[\sum_{b+c=a}C_a^b
(\nabla^b\Lambda^{-}\cdot\nabla\nabla^c\Lambda^{+})
+ \nabla \nabla^a p\big] \cdot \langle x+e t \rangle^{4\mu}
\nabla^a\Lambda^{+}e^{q(\sigma^+)} dx \\
&\quad -\int_{\mathbb{R}^n}
\big[\sum_{b+c=a}C_a^b
(\nabla^b\Lambda^{+}\cdot\nabla\nabla^c\Lambda^{-})
+ \nabla \nabla^a p \big] \cdot \langle x- et \rangle^{4\mu}
\nabla^a\Lambda^{-}e^{q(\sigma^-)} dx.
\end{align*}
Integrating both sides of the above equality in time on $[0,t)$ and summing over $1\leq |a|\leq k$,
we obtain
\begin{align*}
&\sum_{1\leq |a|\leq k}\int_{\mathbb{R}^n}
|\langle x+e t \rangle^{2\mu} \nabla^a \Lambda^{+}|^2
+|\langle x- et \rangle^{2\mu} \nabla^a \Lambda^{-}|^2 dx  \\
&+\int_0^t\int_{\mathbb{R}^n} \frac{|\langle x+e \tau \rangle^{2\mu}
\nabla^a\Lambda^{+}|^2}{\langle e\cdot x-\tau \rangle^{2\mu}}
+ \frac{|\langle x-e \tau \rangle^{2\mu} \nabla^a\Lambda^{-}|^2}
{\langle e\cdot x+\tau \rangle^{2\mu}} dxd\tau\\
&\lesssim -\sum_{1\leq |a|\leq k}\int_0^t\int_{\mathbb{R}^n}
\big[\sum_{b+c=a}C_a^b
(\nabla^b\Lambda^{-}\cdot\nabla\nabla^c\Lambda^{+})
+ \nabla \nabla^a p\big] \cdot \langle x+e \tau \rangle^{4\mu}
\nabla^a\Lambda^{+}e^{q(\sigma^+)} dxd\tau\\
&\quad -\sum_{1\leq |a|\leq k}\int_0^t\int_{\mathbb{R}^n}
\big[\sum_{b+c=a}C_a^b
(\nabla^b\Lambda^{+}\cdot\nabla\nabla^c\Lambda^{-})
+ \nabla \nabla^a p \big] \cdot \langle x-e \tau \rangle^{4\mu}
\nabla^a\Lambda^{-}e^{q(\sigma^-)} dxd\tau\\
&\quad+E_k(0).
\end{align*}
Note that we have used $e^{q(\sigma^\pm)} \sim 1$. We need to deal with the first two lines on the right hand side in the above.
Since they can be treated in the same fashion, we only take care of the first line. We re-organize it as follows
\begin{align}\label{EneEstiHigh1}
&-\sum_{1\leq|a|\leq k}
\int_0^t\int_{\mathbb{R}^n} \big[\Lambda^{-}\cdot\nabla
\big(\langle x+e t \rangle^{2\mu}\nabla^a\Lambda^{+}\big)\big]
\cdot \langle x+e\tau \rangle^{2\mu}
\nabla^a\Lambda^{+}e^{q(\sigma^+)}
 dxd\tau \nonumber\\
&+\sum_{1\leq|a|\leq k}
\int_0^t\int_{\mathbb{R}^n}
\big[2\mu \langle x+e t \rangle^{2\mu-2} (x+e t)\cdot \Lambda^{-}
\nabla^a\Lambda^{+}\big]
\cdot \langle x+e\tau \rangle^{2\mu}
\nabla^a\Lambda^{+}e^{q(\sigma^+)}
 dxd\tau  \nonumber\\
&-\sum_{1\leq |a|\leq k}
\int_0^t\int_{\mathbb{R}^n}
\big[\sum_{b+c=a,c\neq a}C_a^b
(\nabla^b\Lambda^{-}\cdot\nabla\nabla^c\Lambda^{+})\big]
\cdot \langle x+e\tau \rangle^{4\mu}
\nabla^a\Lambda^{+}e^{q(\sigma^+)}dxd\tau \nonumber\\
&-\int_0^t\int_{\mathbb{R}^n}
\nabla \nabla^a p
\cdot \langle x+e\tau \rangle^{4\mu}
\nabla^a\Lambda^{+}e^{q(\sigma^+)}dxd\tau .
\end{align}
In the sequel, we are going to estimate \eqref{EneEstiHigh1} line
by line.

The first line  of \eqref{EneEstiHigh1}
refers to the group which contains the highest order terms.
At first sight, they may lose one derivative.
But thanks to the symmetry of the system,
we compute by Lemma \ref{LemSoboWei1} that
\begin{align}
&-\sum_{1\leq|a|\leq k}\int_0^t\int_{\mathbb{R}^n} \big[
\Lambda^{-}\cdot\nabla\big(\langle x+e\tau \rangle^{2\mu}
\nabla^a\Lambda^{+}\big)\big]
\cdot \langle x+e\tau \rangle^{2\mu}
\nabla^a\Lambda^{+}e^{q(\sigma^+)} dxd\tau\nonumber\\
&=\sum_{1\leq|a|\leq k}\frac 12\int_0^t\int_{\mathbb{R}^n}
\big|\langle x+e\tau \rangle^{2\mu}\nabla^a\Lambda^{+}\big|^2
\Lambda^{-}_1 \frac{e^{q(\sigma^+)}}{ \langle e\cdot x-\tau \rangle^{2\mu} } dxd\tau\nonumber\\
&\leq\sum_{1\leq|a|\leq k}
\big\| \frac{|\langle x+e\tau \rangle^{2\mu}
\nabla^a\Lambda^{+}|^2}{\langle e\cdot x-\tau \rangle^{2\mu}}\big\|_{L^1_{t,x}}
\big\|\Lambda^{-}e^{q(\sigma^-)}\big\|_{L^\infty_{t,x}}\nonumber\\
&\lesssim W_k(t)\widetilde{E}_k(t)^{\frac{1}{2}}.
\end{align}

Next, by Lemma \ref{LemSoboWei1}, the second line of \eqref{EneEstiHigh1}
is estimated by
\begin{align}
&\sum_{1\leq|a|\leq k} \int_0^t\int_{\mathbb{R}^n}
2\mu |\langle x+e\tau \rangle^{3\mu-1} \nabla^a\Lambda^{+}|
\frac{|\langle x-e\tau \rangle^{\mu}\Lambda^{-}|}{ \langle e\cdot x+\tau \rangle^{\mu}}
\frac{|\langle x+e\tau \rangle^{2\mu}
\nabla^a\Lambda^{+}| }{\langle e\cdot x-\tau \rangle^{\mu} }
 dxd\tau \nonumber\\
&\lesssim \sum_{1\leq|a|\leq k}
\| \langle x+e\tau \rangle^{3\mu-1} \nabla^a\Lambda^{+} \|_{L^\infty_tL^2_x}
\| \frac{|\langle x-e\tau \rangle^{\mu}\Lambda^{-}|}{ \langle e\cdot x+\tau \rangle^{\mu}}\|_{L^2_tL^\infty_x}
\|\frac{|\langle x+e\tau \rangle^{2\mu}
\nabla^a\Lambda^{+}| }{\langle e\cdot x-\tau \rangle^{\mu} } \|_{L^2_{t,x}}\nonumber\\
&\lesssim W_k(t)\widetilde{E}_k(t)^{\frac{1}{2}}.
\end{align}
In the above, we have used the fact that $$3\mu-1\leq 2\mu$$ for $\frac{1}{2} < \mu < \frac{2}{3}$.

The third line of \eqref{EneEstiHigh1} is bounded by
\begin{align}\label{HHLL1}
&\sum_{\tiny\begin{matrix}|b|+|c|= |a|+1 \\
1 \leq |a|,|b|,|c| \leq k \end{matrix}}
\int_0^t\int_{\mathbb{R}^n}
\langle x+e\tau \rangle^{4\mu}
|\nabla^b\Lambda^{-}| |\nabla^c\Lambda^{+}|
 |\nabla^a \Lambda^{+}| dxd\tau \nonumber\\
&\leq \sum_{\tiny\begin{matrix}|b|+|c|= |a| +1 \\
1\leq |a|,|b|,|c| \leq k  \end{matrix}}
\int_0^t\int_{\mathbb{R}^n}
\frac{|\langle x+e\tau \rangle^{2\mu}\nabla^c\Lambda^{+}\big|}{\langle e\cdot x-\tau \rangle^{\mu}}
\frac{|\langle x+e\tau \rangle^{2\mu}\nabla^a\Lambda^{+}\big|}{\langle e\cdot x-\tau \rangle^{\mu}}
\big|\langle x-e\tau \rangle^{2\mu}\nabla^b\Lambda^{-}\big|
  dxd\tau.
\end{align}
If $|b|\geq |c|$, then $|c|\leq [(k+1)/2]$,
by the assumption $k\geq n+3$, one infers that
$|c|+[n/2]+1\leq k$.
By Lemma \ref{LemSoboWei1}, \eqref{HHLL1} can be bounded by
\begin{align*}
&\sum_{\tiny\begin{matrix}|b|+|c|= |a| +1 \\
1\leq |c|\leq |b| \leq |a|\leq k  \end{matrix}}
\Big\|\frac{|\langle x+e\tau \rangle^{2\mu}\nabla^a\Lambda^{+}\big|}
{\langle e\cdot x-\tau \rangle^{\mu}}\Big\|_{L^2_{t,x}}
\big\|\langle x-e\tau \rangle^{2\mu}\nabla^b\Lambda^{-}\big\|_{L^\infty_tL^2_x}
\big\|\frac{|\langle x+e\tau \rangle^{2\mu}\nabla^c\Lambda^{+}\big|}{\langle e\cdot x-\tau \rangle^{\mu}}\big\|_{L^2_tL^\infty_x},\nonumber
\end{align*}
which is further bounded by
$$W_k(t)\widetilde{E}_k(t)^{\frac{1}{2}}.$$
Otherwise, if $|b|\leq |c|$, similarly one infers that
$|b|+[n/2]+1\leq k$.
By Lemma \ref{LemSoboWei1}, \eqref{HHLL1} can be bounded by
$W_k(t)\widetilde{E}_k(t)^{\frac{1}{2}}$.

Finally, we are going to
estimate the last line of \eqref{EneEstiHigh1}.
Employing integration by parts, one deduces that
\begin{align*}
&-\sum_{1\leq |a|\leq k}
\int_0^t\int_{\mathbb{R}^n}
\nabla \nabla^a p
\cdot \langle x+e\tau \rangle^{4\mu}
\nabla^a\Lambda^{+}e^{q(\sigma^+)}dxd\tau \nonumber\\
&=\sum_{1\leq |a|\leq k}
\int_0^t\int_{\mathbb{R}^n}
 \nabla^a p
\cdot 4\mu\langle x+e\tau \rangle^{4\mu-2}(x+e\tau)
\nabla^a\Lambda^{+}e^{q(\sigma^+)}dxd\tau \nonumber\\
&+\sum_{1\leq |a|\leq k}
\int_0^t\int_{\mathbb{R}^n}
\nabla^a p
\cdot \langle x+e\tau \rangle^{4\mu}
\nabla^a\Lambda^{+}
\frac{e^{q(\sigma^+)}}{\langle e\cdot x-\tau \rangle^{2\mu}}dxd\tau.
\end{align*}
Then by Lemma \ref{lemmaPresWei1} and Lemma \ref{LemPreWei3},
we estimate the above by
\begin{align*}
&\sum_{1\leq |a|\leq k}
\int_0^t\int_{\mathbb{R}^n}
 4\mu \langle x+e\tau \rangle^{2\mu-1}\langle x-e\tau \rangle^{\mu}|\nabla^a p|
\frac{\langle x+e\tau \rangle^{2\mu}|\nabla^a\Lambda^{+}| }
{\langle e\cdot x-\tau \rangle^{\mu}}dxd\tau \nonumber\\
&\quad+ \sum_{1\leq |a|\leq k}
\int_0^t\int_{\mathbb{R}^n}
\langle x+e\tau \rangle^{2\mu} |\nabla^a p|
\frac{\langle x+e\tau \rangle^{2\mu}|\nabla^a\Lambda^{+}| }
{\langle e\cdot x-\tau \rangle^{\mu}}dxd\tau \nonumber\\
&\leq 4\mu\sum_{1\leq |a|\leq k}
\|\langle x+e\tau \rangle^{2\mu-1}\langle x-e\tau \rangle^{\mu}\nabla^a p\|_{L^2_{t,x}}
\big\|\frac{\langle x+e\tau \rangle^{2\mu}|\nabla^a\Lambda^{+}| }
{\langle e\cdot x-\tau \rangle^{\mu}} \big\|_{L^2_{t,x}} \nonumber\\
&\quad+\sum_{1\leq |a|\leq k}
\|\langle x+e\tau \rangle^{2\mu} \nabla^a p\|_{L^2_{t,x}}
\big\|\frac{\langle x+e\tau \rangle^{2\mu}|\nabla^a\Lambda^{+}| }
{\langle e\cdot x-\tau \rangle^{\mu}} \big\|_{L^2_{t,x}} \nonumber\\
&\lesssim W_k(t)\widetilde{E}_k(t)^{\frac{1}{2}}.
\end{align*}

Gathering the above estimates gives that
\begin{align} \label{PrioHighEN}
&\sum_{1\leq |a|\leq k}\int_{\mathbb{R}^n}
|\langle x+e t \rangle^{2\mu} \nabla^a \Lambda^{+}|^2
+|\langle x- et \rangle^{2\mu} \nabla^a \Lambda^{-}|^2 dx  \nonumber\\
&+\sum_{1\leq |a|\leq k}\int_{\mathbb{R}^n}
\int_0^t\int_{\mathbb{R}^n} \frac{|\langle x+e \tau \rangle^{2\mu}
\nabla^a\Lambda^{+}|^2}{\langle e\cdot x-\tau \rangle^{2\mu}}
+ \frac{|\langle x-e \tau \rangle^{2\mu} \nabla^a\Lambda^{-}|^2}
{\langle e\cdot x+\tau \rangle^{2\mu}} dxd\tau \nonumber\\
&\lesssim E_k(0)+W_k(t)\widetilde{E}_k(t)^{\frac{1}{2}}.
\end{align}
This finishes the higher-order energy estimate.
\begin{remark}\label{RemPreInte}
Note that $\langle x+e t\rangle^{4\mu}\langle x- et\rangle^{2\mu}$
is not in $\mathcal{A}_2$ class in $\mathbb{R}^2$ and $\mathbb{R}^3$. So
the following natural way to estimate the last line of \eqref{EneEstiHigh1} doesn't work:
\begin{align*}
&-\sum_{1\leq |a|\leq k}
\int_0^t\int_{\mathbb{R}^n}
\nabla \nabla^a p
\cdot \langle x+e\tau \rangle^{4\mu}
\nabla^a\Lambda^{+}e^{q(\sigma^+)}dxd\tau \nonumber\\
&\leq\sum_{1\leq |a|\leq k}
\|\langle x+e\tau \rangle^{2\mu}\langle x-e\tau \rangle^{\mu}
\nabla \nabla^a p\|_{L^2_{t,x}}
\big\|\frac{\langle x+e\tau \rangle^{2\mu}\nabla^a\Lambda^{+}e^{q(\sigma^+)}}
{\langle e\cdot x-\tau \rangle^{\mu}}\big\|_{L^2_{t,x}}.
\end{align*}
\end{remark}

\subsection{Zero-order Energy Estimate}
This subsection is devoted to the zero-order energy estimate.
Let $1/2<\mu < 2/3$, $q(s)=\int_0^s \langle\tau\rangle^{-2\mu} d\tau$,
thus $|q(s)|\lesssim 1$. Let $\sigma^{\pm} =\pm e \cdot x- t$,
and let the viscosity parameter $\nu$ in MHD system be zero.
Taking space $L^2$ inner product of the first and the second equations of \eqref{EqParDerMHD} with
$\langle x+e t \rangle^{2\mu} \Lambda^{+}e^{q(\sigma^+)}$
and $\langle x- et \rangle^{\mu} \nabla^a\Lambda^{-}e^{q(\sigma^-)}$, respectively, and then adding them up, we have
\begin{align*}
&\frac 12 \partial_t\int_{\mathbb{R}^n}
|\langle x+e t \rangle^{\mu}  \Lambda^{+}|^2e^{q(\sigma^+)}
+|\langle x- et \rangle^{\mu}  \Lambda^{-}|^2e^{q(\sigma^-)} dx  \\
&+\int_{\mathbb{R}^n} \frac{|\langle x+e t \rangle^{\mu}
\Lambda^{+}|^2}{\langle e\cdot x-t \rangle^{2\mu}}e^{q(\sigma^+)}
+ \frac{|\langle x- et \rangle^{\mu} \Lambda^{-}|^2}
{\langle e\cdot x+t \rangle^{2\mu}} e^{q(\sigma^-)}dx\\
&=-\int_{\mathbb{R}^n} \big[
(\Lambda^{-}\cdot\nabla\Lambda^{+})
+ \nabla  p\big] \cdot \langle x+e t \rangle^{2\mu}
\Lambda^{+}e^{q(\sigma^+)} dx \\
&\quad -\int_{\mathbb{R}^n}\big[
(\Lambda^{+}\cdot\nabla\Lambda^{-})
+ \nabla  p \big] \cdot \langle x- et \rangle^{2\mu}
\Lambda^{-}e^{q(\sigma^-)} dx.
\end{align*}
Note $e^{q(\sigma^\pm)} \sim 1$,
by integrating the above equality in time, we obtain
\begin{align*}
&\frac 12 \int_{\mathbb{R}^n}
|\langle x+et \rangle^{\mu}  \Lambda^{+}|^2
+|\langle x-et \rangle^{\mu}  \Lambda^{-}|^2 dx  \\
&+\int_0^t\int_{\mathbb{R}^n} \frac{|\langle x+e\tau \rangle^{\mu}
\Lambda^{+}|^2}{\langle e\cdot x-\tau \rangle^{2\mu}}
+ \frac{|\langle x-e\tau \rangle^{\mu} \Lambda^{-}|^2}
{\langle e\cdot x+\tau \rangle^{2\mu}} dxd\tau\\
&\lesssim E_k(0)-\int_0^t\int_{\mathbb{R}^n} \big[
(\Lambda^{-}\cdot\nabla\Lambda^{+})
+ \nabla  p\big] \cdot \langle x+e\tau \rangle^{2\mu}
\Lambda^{+} e^{q(\sigma^+)}dxd\tau \\
&\quad -\int_{\mathbb{R}^n}\big[
(\Lambda^{+}\cdot\nabla\Lambda^{-})
+ \nabla  p \big] \cdot \langle x-e\tau \rangle^{2\mu}
\Lambda^{-} e^{q(\sigma^-)}dxd\tau.
\end{align*}
Below we will treat the second term on the right hand side in the above.
The  third term can be handled similarly and thus we omit the details.

Firstly, we use Lemma \ref{LemSoboWei1} to estimate that
\begin{align*}
&-\int_0^t\int_{\mathbb{R}^n}
\Lambda^{-}\cdot\nabla\Lambda^{+}\cdot \langle x+e\tau \rangle^{2\mu}
\Lambda^{+} e^{q(\sigma^+)} dxd\tau \nonumber\\
&\lesssim \int_0^t\int_{\mathbb{R}^n}
\frac{|\langle x-e\tau \rangle^{\mu}\Lambda^{-}|}{\langle e\cdot x+\tau \rangle^{\mu}}
\frac{|\langle x+e\tau \rangle^{2\mu}\nabla\Lambda^{+}|}{\langle e\cdot x-\tau \rangle^{\mu}} |\langle x+e\tau \rangle^{\mu}\Lambda^{+}| dxd\tau \nonumber\\
&\leq\big\|
\frac{\langle x-e\tau \rangle^{\mu}\Lambda^{-}}{\langle e\cdot x+\tau \rangle^{\mu}}
\big\|_{L^2_{t,x}}
\big\|
\frac{\langle x+e\tau \rangle^{2\mu}\nabla\Lambda^{+}}{\langle e\cdot x-\tau \rangle^{\mu}}
\big\|_{L^2_{t,x}}
\big\|
 \langle x+e\tau \rangle^{\mu}\Lambda^{+}
 \big\|_{L^\infty_{t,x}}  \nonumber\\
 &\lesssim  W_k(t)\widetilde{E}_k(t)^{\frac{1}{2}}.
\end{align*}
For the term involving the pressure, thanks to integration by parts and Lemma
\ref{lemmaPresWei1}, we get
\begin{align*}
&-\int_0^t\int_{\mathbb{R}^n}
\nabla p\cdot \langle x+e\tau \rangle^{2\mu}
\Lambda^{+} e^{q(\sigma^+)}dxd\tau \nonumber\\
&\lesssim \int_0^t\int_{\mathbb{R}^n}
|p| \langle x+e\tau \rangle^{2\mu-1}
|\Lambda^{+}|
+ \frac{|p|\langle x+e\tau \rangle^{2\mu}
|\Lambda^{+}|}{\langle e\cdot x-\tau \rangle^{2\mu}}  dxd\tau \nonumber\\
&\lesssim \|p \|_{L^1_t L^2_x}
\|\langle x+e\tau \rangle^{2\mu-1} \Lambda^{+}\|_{L^\infty_t L^2_x}
+\|\langle x+e\tau \rangle^{\mu}p\|_{L^2_{t,x}}
\big\| \frac{\langle x+e\tau \rangle^{\mu}
|\Lambda^{+}|}{\langle e\cdot x-\tau \rangle^{\mu}}\big\|_{L^2_{t,x}}
\\
&\lesssim W_k(t)\widetilde{E}_k(t)^{\frac{1}{2}}.
\end{align*}
Hence we deduce by gathering the above estimates that
\begin{align} \label{PrioZeroEN}
&\frac 12 \int_{\mathbb{R}^n}
|\langle x+et \rangle^{\mu}  \Lambda^{+}|^2
+|\langle x-et \rangle^{\mu}  \Lambda^{-}|^2 dx  \nonumber\\
&+\int_0^t\int_{\mathbb{R}^n} \frac{|\langle x+e\tau \rangle^{\mu}
\Lambda^{+}|^2}{\langle e\cdot x-\tau \rangle^{2\mu}}
+ \frac{|\langle x-e\tau \rangle^{\mu} \Lambda^{-}|^2}
{\langle e\cdot x+\tau \rangle^{2\mu}} dxd\tau \nonumber\\
&\lesssim E_k(0)+W_k(t)\widetilde{E}_k(t)^{\frac{1}{2}}.
\end{align}
Combining \eqref{PrioHighEN} with \eqref{PrioZeroEN} gives the desired
energy estimate \eqref{PrioEnMHD},  which finishes the proof of Theorem \ref{GlobalInvMHD}.

\section{Energy Estimate with Viscosity}
In this section, we are going  to
establish the global solutions for \eqref{MHD}
uniformly in the viscosity parameter. Note that for small initial data, if $\nu$ has a positive lower bound, then the problem can be treated by standard energy estimate and is trivial for experts. Below
we always assume that $\nu\leq \frac 12$ through the following argument.

The energy estimate is divided into three orders:
the higher-order ($k\geq 1$) energy estimate,
the zero-order ($k=0$) energy estimate and the (-1)-order ($k=-1$) energy estimate.
In the higher-order energy estimate, we apply weights $\langle x\pm et \rangle^{2\mu}$.
In the zero-order energy estimate, we apply weights $\langle x\pm et \rangle^{\mu}$.
Both in the higher-order energy estimate and in the zero-order energy estimate, we will use the ghost weight energy. But in the (-1)-order energy estimate, no weight is used.
The advantage is that we can well take care of the extra terms caused by viscosity.
\begin{proof}
We first perform the higher-order $(k\geq 1)$ energy estimate.
Let $1/2<\mu<2/3$, $q(s)=\int_0^s \langle\tau\rangle^{-2\mu} d\tau$, hence $|q(s)|\lesssim 1$.
Let $\sigma^{\pm} =\pm e \cdot x- t$, $1\leq |a|\leq k$, $k\geq n+3$. Taking space-time $L^2$ inner product of the first and second equations of \eqref{EqWeightMHD} with
$\langle x+e t \rangle^{2\mu} \nabla^{a}\Lambda^{+}e^{q(\sigma^+)}$ and $\langle x- et \rangle^{2\mu} \nabla^a\Lambda^{-}e^{q(\sigma^-)}$, respectively. Then adding them up, we have
\begin{align}\label{HighVisE10}
&\frac 12\int_{\mathbb{R}^n}
|\langle x+et \rangle^{2\mu} \nabla^a \Lambda^{+}|^2e^{q(\sigma^+)}
+|\langle x-et \rangle^{2\mu} \nabla^a \Lambda^{-}|^2e^{q(\sigma^-)} dx \nonumber \\
&+\int_0^t\int_{\mathbb{R}^n} \frac{|\langle x+e\tau \rangle^{2\mu}
\nabla^a\Lambda^{+}|^2}{\langle e\cdot x-\tau \rangle^{2\mu}}e^{q(\sigma^+)}
+ \frac{|\langle x-e\tau \rangle^{2\mu} \nabla^a\Lambda^{-}|^2}
{\langle e\cdot x+\tau \rangle^{2\mu}} e^{q(\sigma^-)}dxd\tau\nonumber \\
&-\nu\int_0^t\int\langle x+e\tau \rangle^{2\mu}\Delta \nabla^a\Lambda^+\cdot
  \langle x+e\tau \rangle^{2\mu}\nabla^a\Lambda^+ e^{q(\sigma^+)}dxd\tau
\nonumber\\
&-\nu\int_0^t\int\langle x-e\tau \rangle^{2\mu} \Delta\nabla^a\Lambda^-\cdot
  \langle x-e\tau \rangle^{2\mu}\nabla^a\Lambda^- e^{q(\sigma^-)}dxd\tau
\nonumber\\
&\leq E_k(0)-\int_0^t\int_{\mathbb{R}^n}
\big[\sum_{b+c=a}C_a^b
(\nabla^b\Lambda^{-}\cdot\nabla\nabla^c\Lambda^{+})
+ \nabla \nabla^a p\big] \cdot \langle x+e\tau \rangle^{4\mu}
\nabla^a\Lambda^{+}e^{q(\sigma^+)} dxd\tau \nonumber \\
&\quad -\int_0^t\int_{\mathbb{R}^n}
\big[\sum_{b+c=a}C_a^b
(\nabla^b\Lambda^{+}\cdot\nabla\nabla^c\Lambda^{-})
+ \nabla \nabla^a p \big] \cdot \langle x-e\tau \rangle^{4\mu}
\nabla^a\Lambda^{-}e^{q(\sigma^-)} dxd\tau.
\end{align}
As in the energy estimate we have done in last section, the last two groups
on the right hand side of the above inequality can be bounded by
$$\widetilde{E}_k^{\frac12}(t)W_k(t) \leq \widetilde{\mathcal{E}}_k^{\frac12}(t)W_k(t).$$
Hence we only need deal with the third and the fourth line of \eqref{HighVisE10}
which involve viscosity.

We first handle the former line.
\begin{align} \label{HighVisE11}
&-\nu\int_0^t\int_{\mathbb{R}^n}\langle x+e \tau \rangle^{2\mu} \Delta\nabla^a\Lambda^+\cdot
  \langle x+e \tau \rangle^{2\mu}\nabla^a\Lambda^+ e^{q(\sigma^+)}dxd\tau
\nonumber\\
&=\nu\int_0^t\int_{\mathbb{R}^n}|\langle x+e \tau \rangle^{2\mu} \nabla\nabla^a\Lambda^+|^2 e^{q(\sigma^+)}
dxd\tau\nonumber\\
&\quad +2\nu\int_0^t\int_{\mathbb{R}^n}\nabla\langle x+e \tau \rangle^{2\mu} \cdot\nabla\nabla^a\Lambda^+\cdot
  \langle x+e \tau \rangle^{2\mu}\nabla^a\Lambda^+ e^{q(\sigma^+)}dxd\tau
\nonumber\\
&\quad +\nu\int_0^t\int_{\mathbb{R}^n}\langle x+e \tau \rangle^{2\mu} e\cdot\nabla\nabla^a\Lambda^+\cdot
  \langle x+e \tau \rangle^{2\mu}\nabla^a\Lambda^+
  \frac{e^{q(\sigma^+)}}{\langle e\cdot x-\tau\rangle^{2\mu}}  dxd\tau.
\end{align}
Note that
\begin{equation*}
|\nabla\langle x+e \tau \rangle^{2\mu} |
\leq 2\mu \langle x+e \tau \rangle^{2\mu-1}
\leq 2\mu \langle x+e \tau \rangle^{\mu}
\end{equation*}
for $\frac 12<\mu<\frac 23$.
Hence, for the second line on the right hand side of \eqref{HighVisE11}, we have
\begin{align*}
&2\nu\int_0^t\int_{\mathbb{R}^n}\nabla\langle x+e \tau \rangle^{2\mu} \cdot\nabla\nabla^a\Lambda^+\cdot
  \langle x+e \tau \rangle^{2\mu}\nabla^a\Lambda^+ e^{q(\sigma^+)}dxd\tau
\nonumber\\
&\geq -\frac14 \nu\int_0^t\int_{\mathbb{R}^n}
  |\langle x+e \tau \rangle^{2\mu} \nabla\nabla^a\Lambda^+|^2e^{q(\sigma^+)}dxd\tau \\
&\quad-4\nu\int_0^t\int_{\mathbb{R}^n}|\nabla\langle x+e \tau \rangle^{2\mu}\nabla^a\Lambda^+|^2
e^{q(\sigma^+)}dxd\tau \\
&\geq-\frac14 \nu\int_0^t\int_{\mathbb{R}^n}
  |\langle x+e \tau \rangle^{2\mu} \nabla\nabla^a\Lambda^+|^2e^{q(\sigma^+)}dxd\tau\\
&\quad-16\nu\int_0^t\int_{\mathbb{R}^n}|\langle x+e \tau \rangle^{\mu}\nabla^a\Lambda^+|^2e^{q(\sigma^+)}dxd\tau.
\end{align*}
For the third line on the right hand side of \eqref{HighVisE11},
one has
\begin{align*}
&\nu\int_0^t\int_{\mathbb{R}^n}\langle x+e \tau \rangle^{2\mu} e\cdot\nabla\nabla^a\Lambda^+\cdot
  \langle x+e \tau \rangle^{2\mu}\nabla^a\Lambda^+
  \frac{e^{q(\sigma^+)}}{\langle e\cdot x-\tau \rangle^{2\mu}}  dxd\tau\\
&\geq-\frac14 \nu\int_0^t\int_{\mathbb{R}^n}
  |\langle x+e \tau \rangle^{2\mu} \nabla\nabla^a\Lambda^+|^2e^{q(\sigma^+)}dxd\tau\\
&\quad-\nu\int_0^t\int_{\mathbb{R}^n} \frac{|\langle x+e \tau \rangle^{2\mu}\nabla^a\Lambda^+|^2}
{\langle e\cdot x-\tau \rangle^{2\mu}} e^{q(\sigma^+)} dxd\tau.
\end{align*}
Inserting the above two estimates into \eqref{HighVisE11}, one gets
\begin{align*}
&-\nu\int_0^t\int_{\mathbb{R}^n}\langle x+e \tau \rangle^{2\mu} \Delta\nabla^a\Lambda^+\cdot
  \langle x+e \tau \rangle^{2\mu}\nabla^a\Lambda^+ e^{q(\sigma^+)}dxd\tau\nonumber\\
&\geq \frac 12 \nu\int_0^t\int_{\mathbb{R}^n}|\langle x+e \tau \rangle^{2\mu} \nabla\nabla^a\Lambda^+|^2 e^{q(\sigma^+)}dxd\tau\nonumber\\
&\quad-16\nu\int_0^t\int_{\mathbb{R}^n}|\langle x+e \tau \rangle^{\mu}\nabla^a\Lambda^+|^2e^{q(\sigma^+)}dxd\tau \\
&\quad-\nu\int_0^t\int_{\mathbb{R}^n} \frac{|\langle x+e \tau \rangle^{2\mu}\nabla^a\Lambda^+|^2}
{\langle e\cdot x-\tau \rangle^{2\mu}} e^{q(\sigma^+)} dxd\tau.
\end{align*}
Note that one has the similar estimate for the fourth line of \eqref{HighVisE10}.
Thus, employing the assumption that $\nu\leq \frac12$,
we deduce by gathering the above estimates that
\begin{align} \label{VisEnekOrder}
&I_j = \frac 12\int_{\mathbb{R}^n}
|\langle x+et \rangle^{2\mu} \nabla^j \Lambda^{+}|^2e^{q(\sigma^+)}
+|\langle x-et \rangle^{2\mu} \nabla^j \Lambda^{-}|^2e^{q(\sigma^-)} dx  \nonumber\\
&+\frac 12\int_0^t\int_{\mathbb{R}^n} \frac{|\langle x+e\tau \rangle^{2\mu}
\nabla^j\Lambda^{+}|^2}{\langle e\cdot x-\tau \rangle^{2\mu}}e^{q(\sigma^+)}
+\frac{|\langle x-e\tau \rangle^{2\mu} \nabla^j\Lambda^{-}|^2}
{\langle e\cdot x+\tau \rangle^{2\mu}} e^{q(\sigma^-)}dxd\tau\nonumber\\
&+\frac 12 \nu\int_0^t\int_{\mathbb{R}^n}|\langle x+e \tau \rangle^{2\mu} \nabla\nabla^j\Lambda^+|^2 e^{q(\sigma^+)}
+  |\langle x-e \tau \rangle^{2\mu} \nabla\nabla^j\Lambda^-|^2 e^{q(\sigma^-)}dxd\tau\nonumber\\
&-16\nu\int_0^t\int_{\mathbb{R}^n}|\langle x+e \tau \rangle^{\mu}\nabla^j\Lambda^+|^2e^{q(\sigma^+)}
 +|\langle x-e \tau \rangle^{\mu}\nabla^j\Lambda^-|^2e^{q(\sigma^-)}dxd\tau\nonumber\\
&\leq \mathcal{E}_k(0)+C\widetilde{\mathcal{E}}_k^{\frac12}(t)W_k(t),
\end{align}
where $1\leq j\leq k$, $C$ is a constant which depends on $k,\ \mu,\ n$ but doesn't depend on $\nu$ or $t$.
Here we denote the left hand side of the above inequality \eqref{VisEnekOrder} by $I_j$.
Later we are going to use this estimate.

Next, we perform the zero-order energy estimate.
We take the space-time $L^2$ inner product of the first and the second equations of \eqref{EqParDerMHD} with
$\langle x+e t \rangle^{2\mu} \Lambda^{+}e^{q(\sigma^+)}$ and $\langle x- et \rangle^{2\mu} \Lambda^{-}e^{q(\sigma^-)}$, respectively,
then add them up. We can repeat the argument we have done
for the higher-order energy estimate in the above with just minor changes:
the weights $\langle x\pm et \rangle^{2\mu}$ are replaced by $\langle x\pm et \rangle^{\mu}$, while the weights $\langle x\pm et \rangle^{\mu}$ are replaced by
$1$, the index $a$ are replaced by $0$. As a result, we can get
\begin{align}\label{VisEne0Order}
&I_0 = \frac 12\int_{\mathbb{R}^n}
 |\langle x+et \rangle^{\mu} \Lambda^{+}|^2e^{q(\sigma^+)}
+|\langle x-et \rangle^{\mu} \Lambda^{-}|^2e^{q(\sigma^-)} dx  \nonumber\\
&+\frac 12\int_0^t\int_{\mathbb{R}^n} \frac{|\langle x+e\tau \rangle^{\mu}
   \Lambda^{+}|^2}{\langle e\cdot x-\tau \rangle^{2\mu}}e^{q(\sigma^+)}
+ \frac{|\langle x-e\tau \rangle^{\mu} \Lambda^{-}|^2}
{\langle e\cdot x+\tau \rangle^{2\mu}} e^{q(\sigma^-)}dxd\tau \nonumber\\
&+\frac 12 \nu\int_0^t\int_{\mathbb{R}^n}|\langle x+e t \rangle^{\mu} \nabla\Lambda^+|^2 e^{q(\sigma^+)}
+|\langle x- et \rangle^{\mu} \nabla\Lambda^-|^2 e^{q(\sigma^-)}dxd\tau \nonumber\\
&-16\nu\int_0^t\int_{\mathbb{R}^n}
 |\Lambda^+|^2e^{q(\sigma^+)}+|\Lambda^-|^2e^{q(\sigma^-)}dxd\tau \nonumber\\
&\leq \mathcal{E}_k(0)+C\widetilde{\mathcal{E}}_k^{\frac12}(t)W_k(t).
\end{align}
We denote the left hand side of the above inequality by $I_0$.
Later we are going to use this estimate.

Now we perform the $(-1)$-order energy estimate.
Applying $|\nabla|^{-1}$ to \eqref{MHD},
and taking the space-time $L^2$ inner product of the first and second equation of the resulting system
with $|\nabla|^{-1}\Lambda^{+}$ and $|\nabla|^{-1}\Lambda^{-}$, respectively,
we have
\begin{align*}
&\frac 12 \int_{\mathbb{R}^n}
\big| |\nabla|^{-1}\Lambda^{+}(t,\cdot)\big|^2
+ \big||\nabla|^{-1}\Lambda^{-}(t,\cdot)\big|^2 dx
+\nu\int_0^t\int_{\mathbb{R}^n} |\Lambda^{+}|^2+|\Lambda^{-}|^2 dxd\tau\\
&\leq \mathcal{E}_k(0)
-\int_0^t\int_{\mathbb{R}^n}
\big[ |\nabla|^{-1}\nabla\cdot(\Lambda^{-}\otimes\Lambda^{+})+ |\nabla|^{-1}\nabla p\big] \cdot |\nabla|^{-1}\Lambda^{+} dxd\tau \\
&\quad-\int_0^t\int_{\mathbb{R}^n}
\big[ |\nabla|^{-1}\nabla\cdot(\Lambda^{+}\otimes\Lambda^{-})+ |\nabla|^{-1}\nabla p\big] \cdot |\nabla|^{-1}\Lambda^{-} dxd\tau.
\end{align*}
Recalling the expression of the pressure \eqref{Pressure1} and using the $L^2$ boundness of the Riesz transform, one can
bound the last two groups in the above by
\begin{align*}
&\big(\big\| |\Lambda^{+}| |\Lambda^{-}| \big\|_{L^1_tL^2_x} +\|p\|_{L^1_tL^2_x}\big)
\big(\big\| |\nabla|^{-1}\Lambda^{+} \big\|_{L^{\infty}_tL^2_x}
+\big\| |\nabla|^{-1}\Lambda^{-} \big\|_{L^{\infty}_tL^2_x}\big)\\
&\leq 2 \big\| |\Lambda^{+}| |\Lambda^{-}| \big\|_{L^1_tL^2_x}
\widetilde{\mathcal{E}}_k^{\frac12}(t).
\end{align*}
On the other hand, employing Lemma \ref{LemSoboWei1}, we have
\begin{align*}
\int_0^t \big\| |\Lambda^{+}| |\Lambda^{-}| \big\|_{L^2_x}d\tau
&\leq \int_0^t \big\|
\frac{|\langle x+e\tau \rangle^{\mu}\Lambda^{+}|}{\langle e\cdot x-\tau \rangle^{\mu}}
\frac{ |\langle x-e\tau \rangle^{\mu}\Lambda^{-}|}{\langle e\cdot x+\tau \rangle^{\mu}} \big\|_{L^2_x}d\tau\\
&\leq  \big\|
\frac{|\langle x+e\tau \rangle^{\mu}\Lambda^{+}|}{\langle e\cdot x-\tau \rangle^{\mu}} \big\|_{L^2_tL^2_x}
\big\|\frac{ |\langle x-e\tau \rangle^{\mu}\Lambda^{-}|}{\langle e\cdot x+\tau \rangle^{\mu}} \big\|_{L^2_tL^\infty_x}
\lesssim W_k(t).
\end{align*}
Hence, we have
\begin{align} \label{VisEneNegOrder}
&I_{-1} = \frac12 \int_{\mathbb{R}^n}
\big| |\nabla|^{-1}\Lambda^{+}\big|^2 + \big||\nabla|^{-1}\Lambda^{-}\big|^2 dx
+\nu\int^t_0\int |\Lambda^{+}|^2+|\Lambda^{-}|^2 dxd\tau \nonumber\\
&\leq CW_k(t)\widetilde{\mathcal{E}}^{\frac 12}_{k}(t)+\mathcal{E}_k(0).
\end{align}
We denote the left hand side of the above inequality by $I_{-1}$.

Now we are going deduce the desired energy estimate.
Multiplying $I_j$ by $(\frac{1}{64})^{j}$, then summing over $1\leq j\leq k$.
 Moreover, we add $I_0$ and $32C_\mu I_{-1}$, where $C_\mu\geq e^{q(\sigma^{\pm})}$, then
we get
\begin{align} \label{EneFinal}
&\sum_{1\leq j\leq k}\frac 12\times (\frac{1}{64})^{j} \int_{\mathbb{R}^n}
|\langle x+e t \rangle^{2\mu}\nabla^j\Lambda^+|^2e^{q(\sigma^+)}
+|\langle x- et\rangle^{2\mu}\nabla^j\Lambda^-|^2e^{q(\sigma^-)} dx  \nonumber\\
&+\frac 12\int_{\mathbb{R}^n}
|\langle x+e t \rangle^{\mu}\Lambda^+|^2e^{q(\sigma^+)}
+|\langle x- et\rangle^{\mu}\Lambda^-|^2e^{q(\sigma^-)} dx  \nonumber\\
&+16C_\mu\int_{\mathbb{R}^n}
\big||\nabla|^{-1}\Lambda^+\big|^2e^{q(\sigma^+)}
+\big||\nabla|^{-1}\Lambda^-\big|^2e^{q(\sigma^-)} dx  \nonumber\\
&+\sum_{1\leq j\leq k}{\frac 12}\times(\frac{1}{64})^{j}\nu
\int_0^t\int_{\mathbb{R^n}}
|\langle x+e\tau \rangle^{2\mu} \nabla^{j+1}\Lambda^+|^2 e^{q(\sigma^+)}
+|\langle x-e\tau \rangle^{2\mu}\nabla^{j+1}\Lambda^-|^2 e^{q(\sigma^-)}dxd\tau \nonumber\\
&-\sum_{1\leq j\leq k}16\times(\frac{1}{64})^{j} \nu\int_0^t\int_{\mathbb{R^n}}|\langle x+e\tau \rangle^{\mu}\nabla^j\Lambda^+|^2e^{q(\sigma^+)}
+|\langle x-e \tau \rangle^{\mu}\nabla^j\Lambda^-|^2e^{q(\sigma^-)}dxd\tau \nonumber\\
&+\frac 12 \nu\int_0^t\int_{\mathbb{R^n}}
|\langle x+e t \rangle^{\mu} \nabla\Lambda^+|^2 e^{q(\sigma^+)}
+|\langle x- et \rangle^{\mu} \nabla\Lambda^-|^2 e^{q(\sigma^-)}dxd\tau \nonumber\\
&-16\nu\int_0^t\int_{\mathbb{R^n}}
|\Lambda^+|^2e^{q(\sigma^+)}+|\Lambda^-|^2e^{q(\sigma^-)}dxd\tau
+32\nu C_\mu\int^t_0\int_{\mathbb{R^n}} |\Lambda^{+}|^2+|\Lambda^{-}|^2 dxd\tau \nonumber\\
&+\sum_{1\leq j\leq k}\frac 12\times (\frac{1}{64})^{j}
\int_0^t\int_{\mathbb{R}^n} \frac{|\langle x+e\tau \rangle^{2\mu}
\nabla^j\Lambda^{+}|^2}{\langle e\cdot x-\tau \rangle^{2\mu}}e^{q(\sigma^+)}
+\frac{|\langle x-e\tau \rangle^{2\mu} \nabla^j\Lambda^{-}|^2}
{\langle e\cdot x+\tau \rangle^{2\mu}} e^{q(\sigma^-)}dxd\tau  \nonumber\\
&+\frac 12\int_0^t\int_{\mathbb{R}^n} \frac{|\langle x+e\tau \rangle^{\mu}
\Lambda^{+}|^2}{\langle e\cdot x-\tau \rangle^{2\mu}}e^{q(\sigma^+)}
+\frac{|\langle x-e\tau \rangle^{\mu}\Lambda^{-}|^2}
{\langle e\cdot x+\tau \rangle^{2\mu}} e^{q(\sigma^-)}dxd\tau  \nonumber\\
&\leq C\big[\sum_{1\leq j\leq k}(\frac{1}{64})^{j}+1  +32 C_\mu \big]
\times\big[\mathcal{E}_k(0)+C\widetilde{\mathcal{E}}_k^{\frac12}(t)W_k(t)\big].
\end{align}
In the sequel we will show that the above \eqref{EneFinal}
is equivalent to the desired energy estimate \eqref{PrioEnviscoMHD},
which would infer the second main theorem of this paper.

The first three lines of \eqref{EneFinal} refer to the energy part,
which are equivalent to $\mathcal{E}_k(t)$ in the sense of multiplying by a positive constant (where the constant only depends on $\mu$ and $k$).
The last two lines on the left hand side of \eqref{EneFinal} refer to the ghost energy part, which are equivalent to $W_k(t)$.
The right hand side of the inequality of \eqref{EneFinal} is equivalent to
$$
\mathcal{E}_k(0)+\widetilde{\mathcal{E}}_k^{\frac12}(t)W_k(t).
$$
The fourth line to the seventh line of \eqref{EneFinal} correspond to the viscosity part. In the
sequel we will show that they are equivalent to $V_k$.
First of all, they are obviously controlled by $V_k$.
Hence we need show the converse bound.
We only consider the quantities for $\Lambda^+$.
The ones concerning $\Lambda^-$ can be estimated similarly.

Firstly, we calculate
\begin{align*}
&\sum_{1\leq j\leq k}\frac 12\times(\frac{1}{64})^{j} \nu\int_0^t\int_{\mathbb{R}^n}|\langle x+e \tau\rangle^{2\mu} \nabla^{j+1}\Lambda^+|^2 e^{q(\sigma^+)}dxd\tau \\
&\quad-\sum_{1\leq j\leq k}16\times(\frac{1}{64})^{j}
\nu\int_0^t\int_{\mathbb{R}^n}|\langle x+e \tau \rangle^{\mu}\nabla^j\Lambda^+|^2e^{q(\sigma^+)}dxd\tau  \\
&=\sum_{2\leq j\leq k+1}32\times(\frac{1}{64})^{j} \nu\int_0^t\int_{\mathbb{R}^n}|\langle x+e \tau \rangle^{2\mu} \nabla^{j}\Lambda^+|^2 e^{q(\sigma^+)}dxd\tau\\
&\quad-\sum_{1\leq j\leq k}16\times(\frac{1}{64})^{j}
\nu\int_0^t\int_{\mathbb{R}^n}|\langle x+e \tau \rangle^{\mu}\nabla^j\Lambda^+|^2e^{q(\sigma^+)}dxd\tau \\
&\geq\sum_{2\leq j\leq k+1} 16\times(\frac{1}{64})^{j}
\nu\int_0^t\int_{\mathbb{R}^n}|\langle x+e \tau \rangle^{\mu}\nabla^j\Lambda^+|^2e^{q(\sigma^+)}dxd\tau \\
&\quad-\frac{1}{4}
\nu\int_0^t\int_{\mathbb{R}^n}|\langle x+e \tau \rangle^{\mu}\nabla\Lambda^+|^2e^{q(\sigma^+)}dxd\tau.
\end{align*}
Hence we deduce that
\begin{align*}
&\sum_{1\leq j\leq k}{\frac 12}\times(\frac{1}{64})^{j}\nu
\int_0^t\int_{\mathbb{R^n}}
|\langle x+e\tau \rangle^{2\mu} \nabla^{j+1}\Lambda^+|^2 e^{q(\sigma^+)}
dxd\tau \nonumber\\
&-\sum_{1\leq j\leq k}16\times(\frac{1}{64})^{j} \nu\int_0^t\int_{\mathbb{R^n}}|\langle x+e\tau \rangle^{\mu}\nabla^j\Lambda^+|^2e^{q(\sigma^+)}
dxd\tau \nonumber\\
&+\frac 12 \nu\int_0^t\int_{\mathbb{R^n}}
|\langle x+e t \rangle^{\mu} \nabla\Lambda^+|^2 e^{q(\sigma^+)}
dxd\tau \nonumber\\
&-16\nu\int_0^t\int_{\mathbb{R^n}}
|\Lambda^+|^2e^{q(\sigma^+)}dxd\tau
+32\nu C_\mu\int^t_0\int_{\mathbb{R^n}} |\Lambda^{+}|^2 dxd\tau \\
&\geq
\sum_{2\leq j\leq k+1} 16\times(\frac{1}{64})^{j}
\nu\int_0^t\int_{\mathbb{R}^n}|\langle x+e \tau \rangle^{2\mu}\nabla^j\Lambda^+|^2e^{q(\sigma^+)}dxd\tau \\
&\quad+\frac{1}{4}
\nu\int_0^t\int_{\mathbb{R}^n}|\langle x+e \tau \rangle^{\mu}\nabla\Lambda^+|^2e^{q(\sigma^+)}dxd\tau \\
&\quad+16\nu C_\mu \int_0^t\int_{\mathbb{R^n}}
|\Lambda^+|^2 dxd\tau.
\end{align*}
From which we obtain the converse bound.
\end{proof}

\section*{Acknowledgement.}

The authors were in part supported by NSFC (grant No. 11421061 and 11222107), National Support Program for Young Top-Notch Talents, Shanghai Shu Guang project, and SGST 09DZ2272900.

\end{document}